\documentclass[11pt]{article}
\usepackage[top=2.54cm, bottom=2.54cm, left=2.5cm, right=2.5cm]{geometry}
\usepackage[tbtags]{amsmath}
\usepackage{amssymb}
\usepackage{amsthm}
\usepackage{appendix}
\usepackage{fancyhdr}
\usepackage{latexsym}
\usepackage{mathrsfs}
\usepackage{xcolor}
\usepackage{wasysym}
\usepackage{graphicx}    
\usepackage{subcaption}  
\usepackage{fancyhdr}
\usepackage[colorlinks=true, linkcolor=blue, citecolor=cyan, urlcolor=magenta]{hyperref}
\usepackage{float}
\usepackage{graphicx}
\usepackage[numbers,sort&compress]{natbib}
\usepackage{pict2e,color,xcolor}
\usepackage{tikz}
\usepackage{tikz-3dplot}
\usepackage{tkz-graph}
\usetikzlibrary{arrows,shapes,chains}
\usepackage{enumerate}
\usepackage {verbatim}
\usepackage{amsthm}

\allowdisplaybreaks[4]

\newtheorem{theorem}{\bf Theorem}[section]
\newtheorem{lemma}[theorem]{Lemma}
\newtheorem{claim}{\indent Claim}[theorem]

\newtheorem{conjecture}[theorem]{Conjecture}
\theoremstyle{plain}

\theoremstyle{definition}
\newtheorem{definition}[theorem]{Definition}

\newtheorem{case}{\indent Case}

\usepackage{nomencl}

\usepackage{authblk}

\usepackage{etoolbox}
\AtBeginEnvironment{abstract}{\setlength{\parskip}{0.5em}}
\makenomenclature

\title{\bf On the pancyclicity of $2$-connected $[5,3]$-graphs}
\author[1]{\bf Feng Liu\footnote{Email: liufeng0609@126.com.}}
\author[2]{\bf Hongxi Liu\footnote{Email: hongxiliu1@163.com (corresponding author).}}

\affil[1]{\footnotesize School of Mathematics Sciences, Shanghai Jiao Tong University, 800 Dongchuan Road, Shanghai, 200240, China}
\affil[2]{ \footnotesize Department of Mathematics, East China Normal University, Shanghai, 200241, China}
\date{}

\begin{document}

\maketitle

\begin{abstract}

A graph $G$ is called an $[s,t]$-graph if any induced subgraph of $G$ of order $s$ has size at least $t$. In 2024, Zhan conjectured that every $2$-connected $[p + 2, p]$-graph of order at least $2p + 3$ and with minimum degree at least $p$ is pancyclic, where $p$ is an integer with $3 \leq p \leq 5$. In this paper, we confirm the conjecture for the case $p=3$, thereby taking the first step toward a complete resolution of the conjecture.

\smallskip
\noindent{\bf Keywords:} Pancyclic graph; $[s,t]$-graph; connectivity
			
\smallskip
\noindent{\bf AMS Subject Classification:}  05C38,  05C40, 05C45, 05C75
\end{abstract}


\section{Introduction}

A Hamiltonian cycle in a graph $G$ is a cycle containing every vertex of $G$, and $G$ is said to be Hamiltonian if it contains such a cycle. Hamiltonicity is a central topic in combinatorics, yet deciding whether a given graph is Hamiltonian is NP-complete and no easily verifiable necessary and sufficient condition is known. It is therefore natural and important to study sufficient conditions for Hamiltonicity.  The most classical such condition is Dirac's theorem \cite{dirac1952some}, which states that every graph of order $n$ with minimum degree at least $n/2$ contains a Hamiltonian cycle. Dirac's result initiated the study of degree-based sufficient conditions, and many generalizations have since been obtained; see \cite{ajtai1985first,chvatal1972note,kuhn2013hamilton,krivelevich2011critical,krivelevich2014robust,ferber2018counting,cuckler2009hamiltonian,posa1976hamiltonian} and the survey \cite{gould2014recent}.

\medskip

Pancyclicity is closely related to Hamiltonicity. A graph of order $n$ is pancyclic if it contains cycles of all lengths from $3$ to $n$; thus every pancyclic graph is Hamiltonian, making pancyclicity a stronger notion.  Bondy~\cite{bondy10pancyclic} proposed a meta-conjecture asserting that nearly every nontrivial sufficient condition for Hamiltonicity also yields pancyclicity, apart from a few simple exceptional graphs. For example, it strengthened Dirac’s theorem by proving that a graph of order $n$ with minimum degree at least $n/2$ is either pancyclic or the complete bipartite graph $K_{n/2,n/2}$~\cite{bondy1971pancyclic}. Bauer and Schmeichel~\cite{bauer1990hamiltonian}, building on earlier work~\cite{schmeichel1988cycle}, showed that conditions of Bondy~\cite{bondy1980longest}, Chvátal~\cite{chvatal1972hamilton}, and Fan~\cite{fan1984new} also imply pancyclicity, again with only a small set of exceptions.  Let $\alpha(G)$ and $\kappa(G)$ denote the independence number and connectivity of a graph $G$. Chvátal and Erdős~\cite{chvatal1972note} proved that $G$ is Hamiltonian whenever $\kappa(G)\ge \alpha(G)$; pancyclic analogues of this result have been widely studied~\cite{amar1991pancyclism}. Erd\H{o}s conjectured in 1972 that every Hamiltonian graph with $\alpha(G)\le k$ and $n=\Omega(k^2)$ vertices is pancyclic. This was recently confirmed by Dragani\'c, Correia, and Sudakov~\cite{DCS2024}, who showed that $n=(2+o(1))k^2$ already suffices.

In particular, Jackson and Ordaz~\cite{jackson1990chvatal} conjectured in 1990 that a graph $G$ is pancyclic whenever $\kappa(G)>\alpha(G)$, which, if true, would confirm Bondy’s meta-conjecture in this classical setting. Using an earlier result of Erd\H{o}s~\cite{erdos1972some}, one can show that pancyclicity holds when $\kappa(G)$ is sufficiently large as a function of $\alpha(G)$. The first explicit linear bound was obtained by Keevash and Sudakov~\cite{keevash2010pancyclicity} in 2010, who proved that $\kappa(G)\ge 600\,\alpha(G)$ is sufficient. More recently, Dragani\'c, Correia, and Sudakov~\cite{Draganic2024pancyclic} resolved the conjecture of Jackson and Ordaz asymptotically, showing that $\kappa(G)>(1+o(1))\alpha(G)$ already guarantees pancyclicity.

In the following, the concept of an $[s,t]$-graph naturally extends the notion of independence number.

\begin{definition}
Let $s$ and $t$ be given integers. A graph $G$ is called an {\it $[s,t]$-graph} if every induced subgraph of $G$ of order $s$ has size at least $t$.
\end{definition}

Two simple facts follow:
(1) every $[s,t]$-graph is also an $[s+1,t+1]$-graph;
(2) $\alpha(G)\le k$ if and only if $G$ is a $[k+1,1]$-graph.
 In 2005, Liu and Wang~\cite{LiuWang2005} proved that every 2-connected $[4,2]$-graph of order at least 6 is hamiltonian.
Later, in 2008, Liu~\cite{Liu2008} studied the pancyclicity of $[4,2]$-graphs and obtained the following result:

\begin{theorem}[Liu~\cite{Liu2008}]\label{liu42}
Every $2$-connected $[4, 2]$-graph of order at least $7$ is pancyclic.
\end{theorem}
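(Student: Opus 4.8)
The statement to be proved is Theorem~\ref{liu42}: every $2$-connected $[4,2]$-graph $G$ of order $n\ge 7$ is pancyclic. Note first that a $[4,2]$-graph cannot contain an induced $K_{1,3}$ or an induced path $P_4$ with an extra non-edge, so in particular $\alpha(G)\le 3$ (a set of four independent vertices would induce a subgraph of size $0<2$), and more strongly, among any four vertices at least two edges are present. The plan is to proceed in two stages: first establish that $G$ is Hamiltonian, then "walk down" the cycle lengths from $n$ to $3$.

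For the Hamiltonicity step I would invoke the Chv\'atal--Erd\H{o}s theorem: since $\alpha(G)\le 3$, it suffices to handle the cases $\kappa(G)\ge 3$ (done immediately) and $\kappa(G)=2$ with $\alpha(G)\in\{2,3\}$. When $\alpha(G)\le 2$ the complement is triangle-free, so $G$ is very dense and Hamiltonicity is easy from Dirac-type bounds once $n\ge 7$; one could also cite the earlier Liu--Wang result that every $2$-connected $[4,2]$-graph of order at least $6$ is Hamiltonian, which is already stated in the excerpt and disposes of this step entirely. So assume $G$ has a Hamiltonian cycle $C=v_1v_2\cdots v_nv_1$ and we must produce cycles of every shorter length. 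The standard approach is: given a cycle $C_\ell$ of length $\ell$ with $4\le \ell\le n$, find a cycle $C_{\ell-1}$ of length $\ell-1$. Write $C_\ell=u_1u_2\cdots u_\ell u_1$; if some chord $u_iu_{i+2}$ exists we are done (it gives a $C_{\ell-1}$ together with a triangle, and triangles give $C_3$), so we may assume $C_\ell$ has no "short chord." The $[4,2]$-condition applied to four consecutive vertices $u_{i-1},u_i,u_{i+1},u_{i+2}$ forces, beyond the three cycle edges, at least one more edge among them; since $u_{i-1}u_{i+1}$ and $u_iu_{i+2}$ are excluded, this extra edge must be $u_{i-1}u_{i+2}$, a chord "skipping one vertex." Such a chord lets us replace the path $u_{i-1}u_iu_{i+1}u_{i+2}$ by the edge $u_{i-1}u_{i+2}$ and reinsert one of $u_i,u_{i+1}$ elsewhere, or more simply yields a cycle of length $\ell-1$ directly after a short case analysis using a neighbour of the omitted vertex. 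Iterating down to $\ell=4$, and separately checking that a $C_4$ yields a $C_3$ (again by the $[4,2]$-condition on its four vertices), completes pancyclicity.

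The main obstacle, as is typical in pancyclicity arguments, will be the descent step: converting a $C_\ell$ into a $C_{\ell-1}$ is not automatic from a single skip-chord, because reinserting the deleted vertex requires controlling where its other neighbours lie on the cycle, and one must also rule out the annoying configuration in which $G$ is nearly bipartite on the relevant segment. I expect the cleanest route is to argue by contradiction: take a cycle $C$ of maximal length having no cycle of length one shorter, and analyse the "gaps" — maximal paths on $C$ all of whose skip-chords are present — using the $[4,2]$-condition repeatedly on overlapping $4$-sets to show the neighbourhood structure of off-cycle or boundary vertices is so constrained that $2$-connectivity is violated or a shorter cycle is forced. The small exceptional orders ($n=5,6$, and the structure that makes $n\ge 7$ necessary) should be isolated at the outset so that they do not pollute the inductive descent; concretely one checks directly that $K_{2,3}$ and $K_{3,3}$, the natural dense-but-non-pancyclic candidates, either fail the $[4,2]$-condition or fall below the order threshold, confirming the hypothesis $n\ge 7$ is exactly what is needed.
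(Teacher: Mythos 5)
The paper does not prove this statement at all: it is quoted from Liu's 2008 paper, with an alternative proof attributed to Zhan, so there is no in-text argument to compare against. Judged on its own terms, your proposal has a fatal gap in the descent step. You claim that, for four consecutive cycle vertices $u_{i-1},u_i,u_{i+1},u_{i+2}$ on a chordless-looking $C_\ell$, the $[4,2]$-condition ``forces, beyond the three cycle edges, at least one more edge.'' It does not: those four vertices already induce the three cycle edges, which is $\geq 2$, so the $[4,2]$-condition is vacuous on any four consecutive cycle vertices. The same error sinks your $C_4\Rightarrow C_3$ step (a $4$-cycle has four edges and need not acquire a chord from the $[4,2]$-condition), and also your opening remark that a $[4,2]$-graph has no induced $K_{1,3}$ or $P_4$ (each has three edges, hence is permitted). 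To extract anything from the $[s,t]$-hypothesis one must test it on sparse $4$-sets, e.g.\ sets mixing pairwise non-adjacent cycle vertices with off-cycle vertices, which is exactly why the genuine arguments in this area (Zhan's proof, and the switch/$\widetilde{C}_\ell$ machinery of Dragani\'c--Correia--Sudakov used in this paper for the $[5,3]$ case) build more elaborate configurations rather than looking at consecutive cycle vertices.

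Even setting that aside, the chord you try to force, $u_{i-1}u_{i+2}$, shortcuts two vertices and so produces a cycle of length $\ell-2$, not $\ell-1$; recovering a $C_{\ell-1}$ would require reinserting one of the skipped vertices, and that reinsertion is precisely the hard part, which your sketch defers to ``a short case analysis'' and, in the final paragraph, to an unexecuted plan about gaps and $2$-connectivity. The Hamiltonicity stage is fine only because you fall back on citing Liu--Wang (your Chv\'atal--Erd\H{o}s reduction as written does not cover $\kappa=2$, $\alpha=3$). So the proposal identifies the right skeleton (Hamiltonian cycle plus length descent) but the engine of the descent rests on a false application of the $[4,2]$-condition, and no correct replacement is supplied.
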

 An alternative proof of Theorem~\ref{liu42} was presented in \cite{Zhan2024}.
In $2006$, Li and Wang \cite{LiWang2006} showed that every $2$-connected $[5,3]$-graph of order at least $8$ is hamiltonian.
More recently, in 2024, Zhan~\cite{Zhan20241} proposed the following conjecture.

\begin{conjecture}[Zhan~\cite{Zhan20241}]
Let $p \in \{3, 4, 5\}$. Every $2$-connected $[p+2, p]$-graph of order at least $2p + 3$ with minimum degree at least $p$ is pancyclic.
\end{conjecture}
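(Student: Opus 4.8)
The plan is to prove the three cases in the order $p=3$, then $p=4$, then $p=5$, using the elementary fact that every $[s,t]$-graph is an $[s+1,t+1]$-graph to cascade the smaller cases into the larger ones. Fix $p\in\{3,4,5\}$ and let $G$ be a $2$-connected $[p+2,p]$-graph of order $n\ge 2p+3$ with $\delta(G)\ge p$. If $G$ is moreover a $[p+1,p-1]$-graph, then: for $p=3$ it is a $2$-connected $[4,2]$-graph of order $\ge 9\ge 7$, hence pancyclic by Theorem~\ref{liu42}; for $p\in\{4,5\}$ it is a $2$-connected $[(p-1)+2,\,p-1]$-graph of order $n\ge 2p+3\ge 2(p-1)+3$ with $\delta(G)\ge p\ge p-1$, hence pancyclic by the case $p-1$ already proved. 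Consequently, in every case we may assume that $G$ fails to be a $[p+1,p-1]$-graph, i.e.\ $G$ contains a \emph{deficient set}: a set $S$ of $p+1$ vertices spanning at most $p-2$ edges (for $p=3$, a $4$-set with at most one edge). This deficient set is the structural anchor of the argument: since $G$ is a $[p+2,p]$-graph, every vertex $v\notin S$ has at least two neighbours in $S$, and the very sparse graph $G[S]$ contains an independent set of size at least $3$ (indeed $\ge p+1-(p-2)$, often larger). More globally, a $[p+2,p]$-graph has $\alpha(G)\le p+1$, and if $\alpha(G)=p+1$ then every vertex outside a fixed maximum independent set $I$ is non-adjacent to at most one vertex of $I$.

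\textbf{Step 1: a Hamilton cycle.} Next I would produce a Hamilton cycle $C=v_1v_2\cdots v_nv_1$. For $p=3$ this is immediate from the theorem of Li and Wang~\cite{LiWang2006} that every $2$-connected $[5,3]$-graph of order at least $8$ is Hamiltonian. For $p\in\{4,5\}$ I would establish Hamiltonicity directly by a rotation–extension argument: take a longest cycle $C_0$, and if $C_0$ is not Hamiltonian, use the density forced off any deficient set (every outside vertex having at least two neighbours in $S$) together with $\kappa(G)\ge 2$ to splice in a missed vertex, a contradiction. The bound $\alpha(G)\le p+1$ and the "misses at most one" structure are exactly what makes such patching go through when the longest cycle meets $S$ or a near-maximum independent set.

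\textbf{Step 2: shortening the cycle.} With $C$ in hand, suppose for contradiction that $G$ has no cycle of length $\ell$ for some $3\le\ell\le n-1$. I would record the usual obstructions: a chord $v_iv_j$ whose two $C$-arcs have lengths $\ell-1$ and $n-\ell+1$ would close an $\ell$-cycle, so all such chords are forbidden; and a pair of crossing chords $v_av_c,v_bv_d$ with $a<b<c<d$ creates cycles of lengths $(b-a)+(d-c)+2$ and $n+2-(b-a)-(d-c)$, so crossing pairs of the wrong total span are forbidden as well. The goal is to parlay these forbidden-edge patterns, \emph{seeded by the non-edges already present inside the deficient set} $S$, into a set of $p+2$ vertices spanning at most $p-1$ edges, a flat contradiction with the $[p+2,p]$ property. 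I would organise this by the size of $\ell$. For $\ell$ near $n$ (say $\ell\ge n-p$) a single well-placed chord suffices, and forbidding all of them sparsifies a short window of consecutive $C$-vertices to the point of violating $[p+2,p]$. For $\ell$ near $3$ I would use $[p+2,p]$ in the opposite direction: every $p+2$ consecutive vertices on $C$ span at least $p$ edges, so short chords abound, and a pigeonhole over all such windows locates a short chord that can be iterated to build short cycles one length at a time. The delicate range is the middle one, handled in the spirit of Bauer–Schmeichel~\cite{bauer1990hamiltonian}: repeatedly use the forbidden-chord structure at the endpoints of the missing length to accumulate many common non-neighbours, then invoke $\alpha(G)\le p+1$ and $\delta(G)\ge p$ to crystallise a $(p+2)$-set with at most $p-1$ edges.

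\textbf{Main obstacle.} The hard part is the middle range of $\ell$, and this is exactly where $p=4$ and $p=5$ demand substantially more than $p=3$: as $p$ grows the $[p+2,p]$ condition permits more non-edges per $(p+2)$-set, so the forbidden-chord structure from a missing length no longer overwhelms it quickly, and one must track finer configurations — in particular how the Hamilton cycle threads through the deficient set $S$ and through near-maximum independent sets of size up to $p+1\in\{5,6\}$. I expect the bulk of the work for $p=4,5$ to be a disciplined but lengthy enumeration, stratified by $\alpha(G)$, by the isomorphism type of the sparse graph $G[S]$, and by $\ell$ modulo small integers, with $2$-connectivity and $\delta(G)\ge p$ used to eliminate the degenerate branches. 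The cascading reduction in the Overall Strategy is what keeps this enumeration finite and manageable: at each value of $p$ we face only the "most deficient" graphs, the denser ones having been disposed of by the previous case.
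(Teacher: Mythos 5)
Your proposal is a research plan, not a proof, and the gaps are substantial. First, note that the statement is a conjecture: the paper resolves only the case $p=3$ (Theorem~\ref{thm:5-3}), and $p=4,5$ remain open, so a complete argument for all three cases would be a major advance and must be held to a high standard. Your cascade reduction (if $G$ is also a $[p+1,p-1]$-graph, apply the case $p-1$) is sound, and the elementary consequences you extract from a deficient set (every outside vertex has two neighbours in $S$, $\alpha(G)\le p+1$, the ``misses at most one'' property) are correct. But the two load-bearing steps are not carried out. In Step~1 you need Hamiltonicity of $2$-connected $[6,4]$- and $[7,5]$-graphs under the stated hypotheses; no such theorem is known or cited (Li--Wang covers only $[5,3]$), and ``a rotation--extension argument'' that ``splices in a missed vertex'' is an assertion, not an argument --- in particular you never explain how a vertex with two neighbours in $S$ gets inserted into a longest cycle that may traverse $S$ in an unhelpful order or avoid the relevant neighbours' successors. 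In Step~2 you yourself identify the middle range of $\ell$ as the hard part and then defer it to ``a disciplined but lengthy enumeration'' that you do not perform; nothing in the sketch shows that the non-edges forced by a missing cycle length actually accumulate inside a single $(p+2)$-set rather than being spread around the cycle. Even for $p=3$, the proposal proves nothing beyond quoting Theorem~\ref{liu42} and the Li--Wang Hamiltonicity result.

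For comparison, the paper proceeds quite differently, and only for $p=3$: it first shows (Lemma~\ref{Lemma-[5,3]-diamond-house}) that $G$ contains a $\widetilde{C}_3$ or $\widetilde{C}_4$, and then proves a growth theorem (Theorem~\ref{thm:l-cycle-expansion}) that any $\widetilde{C}_\ell$ with $\ell<n-1$ extends to a $\widetilde{C}_{\ell+1}$ or $\widetilde{C}_{\ell+2}$, using the switch gadget of Dragani\'c--Correia--Sudakov. Since $\widetilde{C}_\ell$ contains cycles of lengths $3$, $\ell$ and $\ell+1$, iterating covers every length from $3$ to $n$; this bypasses the need for a Hamilton cycle up front and replaces global chord bookkeeping by local extension arguments. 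The honest scope of your plan as written is: the Step~0 reduction plus Theorem~\ref{liu42} disposes of the subcase where $G$ is also a $[4,2]$-graph, and everything else --- including all of $p=4$ and $p=5$ --- remains to be proved.
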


Building on these developments, we verify Zhan’s conjecture for the case $p=3$ and establish the following result:

\begin{theorem}\label{thm:5-3}
Every $2$-connected $[5, 3]$-graph of order at least $9$ with minimum degree at least $3$ is pancyclic.
\end{theorem}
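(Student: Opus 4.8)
The plan is to bootstrap pancyclicity from Hamiltonicity, using the $[5,3]$-condition (together with $\delta(G)\ge 3$) to force many chords in a fixed Hamiltonian cycle and then to manufacture every intermediate length from those chords.

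\emph{Set-up.} Five pairwise non-adjacent vertices would induce an edgeless subgraph, so $\alpha(G)\le 4$; also $G$ is a $[6,4]$- and a $[7,5]$-graph, and its complement is $K_5$-free, whence $e(G)\ge n(n-4)/8$. Since $n\ge 9>8$, the theorem of Li and Wang yields a Hamiltonian cycle $C=v_1v_2\cdots v_nv_1$, indices read modulo $n$; we then try to build cycles of all lengths $3,\dots,n-1$ from the chords of $C$. Call $v_iv_{i+k}\in E(G)\setminus E(C)$, with $2\le k\le\lfloor n/2\rfloor$, a chord of length $k$. The elementary observation is that a chord of length $k$ creates a cycle of length $k+1$ and a cycle of length $n-k+1$; moreover $j$ consecutive chords of length $2$ (that is, $v_iv_{i+2},v_{i+2}v_{i+4},\dots$) create a cycle of length $n-j$, and chords can also be chained or crossed to create short cycles (for example, the length-$2$ chords of the square of $C_9$ already generate $C_4,C_5,C_6,C_7$). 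A short calculation shows that if $C$ has a chord of every length $2,\dots,\lfloor n/2\rfloor$ then cycles of all lengths $3,\dots,n-1$ are obtained, and that a genuinely absent length $k$ forces us to produce only the two cycles of lengths $k+1$ and $n-k+1$ by a local argument.

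\emph{Main steps.} First I would show that $C$ has a chord of length $2$: otherwise every $N(v_i)$ is spread out along $C$, and applying the $[5,3]$-condition to $5$-subsets of $V(G)$ whose vertices are pairwise at cyclic distance $2$ contradicts $\alpha(G)\le 4$ once $n\ge 9$ --- unless a long chord is present, which can instead be used to reroute $C$ into a cycle of length $n-1$. Next, the same idea applied to arithmetic-progression-type sets $\{v_i,v_{i+k},v_{i+2k},\dots\}$ forces, for each $k\le\lfloor n/2\rfloor$, either a chord of length $k$ or enough alternative structure --- a nearby short chord, or, via $2$-connectedness, a vertex off $C$ with two consecutive neighbours on $C$, each of which also relies on $\delta(G)\ge3$ to exist --- to build the cycles of lengths $k+1$ and $n-k+1$ directly; since $\alpha(G)\le 4$ bounds the size of every such configuration, only finitely many cases occur for each $k$. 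Finally, assembling the lengths so obtained together with the Hamiltonian cycle $C$ gives cycles of every length from $3$ to $n$, proving pancyclicity.

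\emph{Main obstacle.} The delicate part is the middle range of $k$ and, relatedly, the graphs in which all chords of $C$ are short (the square of a cycle being the prototype): there no single chord delivers the required length, so one must chain several short chords or cross two chords while wasting as few vertices as possible, and carrying out the ensuing case analysis --- together with settling the small orders $n\in\{9,10,11\}$, where the counting is tightest --- is where most of the work lies. The point of the $[5,3]$-condition (via $\alpha(G)\le 4$ and the local density it forces) is exactly to keep all these cases bounded and to guarantee that the required chords or detours are present.
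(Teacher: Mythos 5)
Your outline contains a genuine gap at exactly the point where the theorem's difficulty lies. The reduction to ``for each $k\le\lfloor n/2\rfloor$, either $C$ has a chord of length $k$, or there is enough alternative structure to build cycles of lengths $k+1$ and $n-k+1$ directly'' is asserted, not proved, and it does not follow from the $[5,3]$-condition in any routine way: that condition only constrains $5$-vertex subsets, so it can force \emph{some} chords among five spread-out vertices, but it cannot prescribe a chord of a given length $k$, and you yourself note the prototype obstruction (all chords short, as in the square of a cycle), where the required intermediate lengths must be assembled by chaining or crossing several chords. That chaining argument, uniformly over all $k$ in the middle range, over all configurations allowed by $\alpha(G)\le 4$ and $\delta(G)\ge 3$, and over the tight small orders $n\in\{9,10,11\}$, is the entire content of the theorem; phrases such as ``a local argument'', ``enough alternative structure'', and ``only finitely many cases occur'' defer it rather than carry it out. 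The preliminary facts you do establish ($\alpha(G)\le 4$, the edge count from Tur\'an, Hamiltonicity via Li--Wang) are correct but easy, and the Hamiltonian cycle itself is not the bottleneck.

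For comparison, the paper avoids fixing a Hamiltonian cycle altogether and works bottom-up: it first shows (using $\delta(G)\ge 3$ and the $[5,3]$-condition) that $G$ contains a $\widetilde{C}_3$ or $\widetilde{C}_4$, i.e.\ a cycle with a triangle attached to one edge, and then proves the key extension theorem that a $\widetilde{C}_\ell$ with $\ell<n-1$ can always be enlarged to a $\widetilde{C}_{\ell+1}$ or $\widetilde{C}_{\ell+2}$, using the $(t,s,\ell)$-switch machinery of Dragani\'c--Correia--Sudakov together with $2$-connectedness and a sequence of dumbbell-freeness claims. Since a $\widetilde{C}_\ell$ contains cycles of both lengths $\ell$ and $\ell+1$, jumping by $1$ or $2$ never skips a length, and pancyclicity follows without ever having to manufacture a cycle of a prescribed length inside a fixed Hamiltonian cycle. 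If you want to salvage your top-down route, you would need to supply the full chord-chaining case analysis you postponed, which is likely to be at least as long as the paper's switch argument and considerably less structured.
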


This theorem extends the known results on $[4,2]$-graphs and $[5,3]$-graphs,
providing further evidence of the close relationship between $[s,t]$-conditions and pancyclicity.  The proof of Theorem~\ref{thm:5-3} builds on the ideas of Draganić, Correia, and Sudakov~\cite{Draganic2024}, which were specifically developed for extending cycles in graphs.

We organize the remainder of this paper as follows: Section \ref{sec2} shows some necessary notations and definitions.
In Section \ref{sec:auxi-lemmas}, we list some useful results needed in later proofs.
Section \ref{sec4} is devoted to the proof of Theorem \ref{thm:5-3}. Finally, we give some concluding remarks in Section \ref{sec5}.

\section{Notations}\label{sec2}

For any undefined terminology or notation, we refer to the books \cite{Bondy2008,West1996}.
Denote by $V(G)$ and $E(G)$ the vertex
set and edge set of a graph $G$, respectively; and denoted by $|V(G)|$ and $|E(G)|$ the number of vertices and edges of $G$, respectively.
For a vertex $x \in V(G)$, we denote its neighborhood and degree by $N_G(x)$ and $d_G(x)$, respectively,
and let $N_G[x] = N_G(x) \cup \{x\}$ be its closed neighborhood.
When the graph is clear from context, we omit the subscript and simply write $N(x)$, $d(x)$, and $N[x]$.
The minimum degree of $G$ is denoted by $\delta(G)$.
For a vertex subset $S \subseteq V(G)$, let $N_S(x)$ (resp. $N_S[x]$) denote the neighbors of $x$ in $S$ (resp. together with $x$).
If $H$ is a subgraph of $G$, we write $N_H(x)$ (resp. $N_H[x]$) for $N_{V(H)}(x)$ (resp. $N_{V(H)}[x]$). We denote by $G[S]$ the subgraph of $G$ induced by $S$, and $G-S = G[V(G) \setminus S]$.
For an induced subgraph $H$ of $G$, let $\overline{H} = G[V(G) \setminus V(H)]$.

\medskip

Given two vertex subsets $S$ and $T$ of $G$, we denote by $[S,T]$ the set of edges having one endpoint in $S$ and the other in $T$ of $G$. An {\it $(S,T)$-path} is a path which starts at a
vertex of $S,$ ends at a vertex of $T,$ and whose internal vertices belong to neither $S$ nor $T$.
 Let $P$ be a path. We  use $P[u,v]$ to denote the subpath of $P$ between two  vertices $u$  and $v$.    Let $P$ be an oriented $(u,v)$-path. For $x\in V(P)$ with $x\ne v$, denote by $x^+$ the immediate successor on $P$.  For $x\in V(P)$  with $x\ne u$, denote by $x^-$ the predecessor on $P$.
For $S\subseteq V(P)$, let $S^+=\{x^+:x\in S\setminus\{v\}\}$ and $S^-=\{x^-:x\in S\setminus\{u\}\}$. Specifically, if $u$ has a neighbor $x$ outside $V(P)$, set $u^-=x$; if $v$ has a neighbor $z$ outside $V(P)$, set $v^+=z$.
For $x,y\in V(P)$,  $\overrightarrow{P}[x,y]$ denotes the segment of $P$ from $x$ to $y$ which follows the orientation of $P$, while $\overleftarrow{P}[x,y]$ denotes the opposite segment of $P$ from $x$ to $y$. Particularly, if $x=y$, then $\overrightarrow{P}[x,y]=\overleftarrow{P}[x,y]=x$. For $u,v\in V(G)$, we simply write $u\sim v$ if $uv \in E(G)$, and write $u\nsim v$ if $uv \notin E(G)$.

\medskip

For a positive integer $k,$ the symbol $[k]$
used in this article represents the set $\{1,2,\dots,k\}.$ Furthermore, for integers $a$ and $b$ with $a\le b,$ we
use $[a, b]$ to denote the set of those integers $c$ satisfying $a\le c\le b.$ Denote by $C_n$ and $K_n$
the cycle of order $n$ and  the complete graph of order $n$, respectively. A graph $G$ is {\it $H$-free} if $G$ has no subgraph (not necessarily induced) isomorphic to $H$. For a family $\mathcal{H}$ of graphs, $G$ is {\it $\mathcal{H}$-free} if $G$ is $H$-free for every $H\in \mathcal{H}$.

\medskip

\section{Preliminaries}\label{sec:auxi-lemmas}
In this section, we first introduce some definitions used throughout the paper,
followed by several useful lemmas that will be applied in our proofs.
 We will need the following definition of a cycle which has one triangle attached to one of its edges.

\begin{definition}[Dragani\'{c}-Correia-Sudakov \cite{Draganic2024}]
The graph $\widetilde{C}_{\ell}$ is defined as a ${C}_{\ell}$ with an additional vertex that is adjacent to two consecutive vertices on the cycle, thereby forming a triangle with them.
\end{definition}

\begin{figure}[ht]
  \begin{center}
    \begin{tikzpicture}
      \tikzset{std node fill/.style={
          draw=black, circle, fill=black,
          line width=0.6pt, inner sep=1pt, minimum size=4.5pt
      }}

      \node[std node fill] (x1) at (0,-2.5) {};
      \node[std node fill] (x2) at (1,-3) {};
      \node[std node fill] (x4) at (1,-2) {};
      \node[std node fill] (x5) at (2,-2.5) {};
      \node[xshift=6pt] at (0.8,-3.5)  {$(a)$ $\widetilde{C}_3$};
      \foreach \i/\j in {1/2,1/4,2/4,2/5,4/5}{\draw[black,line width=1pt] (x\i)--(x\j);}

      \node[std node fill] (w1) at (4,-2.5) {};
      \node[std node fill] (w2) at (5,-3) {};
      \node[std node fill] (w3) at (5,-2) {};
      \node[std node fill] (w4) at (6,-3) {};
      \node[std node fill] (w5) at (6,-2) {};
      \node[xshift=6pt] at (5,-3.5)  {$(b)$ $\widetilde{C}_4$};
      \foreach \i/\j in {1/2,1/3,2/3,3/5,4/5,2/4}{\draw[black,line width=1pt] (w\i)--(w\j);}

      \node[std node fill] (y1) at (8,-3) {};
      \node[std node fill] (y2) at (8,-2) {};
      \node[std node fill] (y3) at (9,-2.5) {};
      \node[std node fill] (y4) at (10,-3) {};
      \node[std node fill] (y5) at (10,-2) {};
      \node[xshift=6pt] at (8.8,-3.5)  {$(c)$ bowtie};
      \foreach \i/\j in {1/2,1/3,2/3,3/5,3/4,4/5}{\draw[black,line width=1pt] (y\i)--(y\j);}

      \node[std node fill] (z1) at (12,-3) {};
      \node[std node fill] (z2) at (12,-2) {};
      \node[std node fill] (z3) at (13,-2.5) {};
      \node[std node fill] (z4) at (14,-2.5) {};
      \node[std node fill] (z5) at (15,-3) {};
      \node[std node fill] (z6) at (15,-2) {};
      \node[xshift=6pt] at (13.3,-3.5)  {$(d)$ 1-dumbbell};
      \foreach \i/\j in {1/2,1/3,2/3,3/4,4/5,4/6,5/6}{\draw[black,line width=1pt] (z\i)--(z\j);}
    \end{tikzpicture}
    \caption{\small {Illustrations of the configurations.}}
    \label{Figureeight}
  \end{center}
\end{figure}
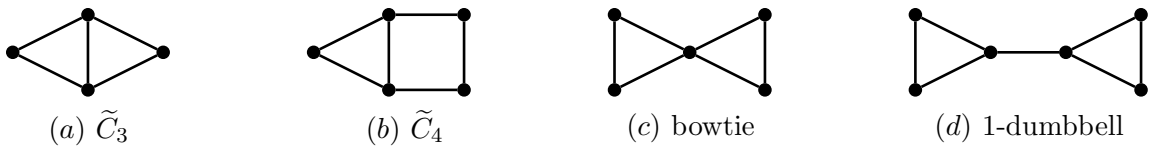

See Figure~\ref{Figureeight}$(a)$ and $(b)$ for illustrations of $\widetilde{C}_3$ and $\widetilde{C}_4$, respectively. Given a graph $H$ and a positive integer $k,$ the {\it $k$-blow-up of $H,$} denoted by $H^{(k)},$ is the graph obtained by replacing
every vertex of $H$ with $k$ different vertices where a copy of $u$ is adjacent to a copy of $v$ in the blow-up graph if and only if $u$ is adjacent to $v$ in $H$.
\begin{lemma}[Zhan \cite{Zhan2024}]\label{Zhan-triangle}
 Let $G$ be a $[p+2,p]$-graph of order $n$ with $\delta(G)\ge p\ge 2$ and $n\ge 2p+3.$ Then $G$ is triangle-free if and only if $p$ is even, $p\ge 6$ and $G=C_5^{(p/2)}$.
\end{lemma}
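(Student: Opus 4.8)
The plan is to prove the two implications separately; the forward (``if'') direction is a verification, while the reverse (``only if'') direction is the substantial part. For the \textbf{``if'' direction}, suppose $p$ is even, $p\ge 6$, and $G=C_5^{(p/2)}$; write $k=p/2\ge 3$. A blow-up of a triangle-free graph is triangle-free, so $G$ is triangle-free; $|V(G)|=5k=5p/2\ge 2p+3$ exactly because $p\ge 6$; and $G$ is $p$-regular, so $\delta(G)=p$. For the $[p+2,p]$-condition, describe a $(p+2)$-subset $S$ by the numbers $a_i=|S\cap A_i|$ of vertices it takes from the five blow-up classes $A_1,\dots,A_5$ in cyclic order, so $\sum_i a_i=2k+2$ and $0\le a_i\le k$; then $G[S]$ has exactly $\sum_i a_ia_{i+1}$ edges (indices modulo $5$). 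Since no two classes together can hold $2k+2$ vertices, the support of $(a_1,\dots,a_5)$ meets an edge of $C_5$, and a short case check over the possible supports shows $\sum_i a_ia_{i+1}\ge 2k=p$ in every case, with equality attained, say, at $(2,k,0,k,0)$. Hence $G$ satisfies all the stated hypotheses and is triangle-free.

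For the \textbf{``only if'' direction}, assume $G$ is triangle-free, a $[p+2,p]$-graph, with $\delta(G)\ge p\ge 2$ and $n:=|V(G)|\ge 2p+3$. Every neighbourhood is an independent set of size $\ge p$, so $\alpha(G)\ge p$; and an independent set of size $p+2$ would induce no edge, so $\alpha(G)\le p+1$. If $\alpha(G)=p+1$, fix a maximum independent set $I$: for each $u\notin I$ the $(p+2)$-set $I\cup\{u\}$ induces $\ge p$ edges, all incident to $u$, so $d_I(u)\ge p$ and $u$ misses at most one vertex of $I$; then no two $u,w\notin I$ can be adjacent (else they share a neighbour in $I$, creating a triangle), so $V(G)\setminus I$ is independent of size $n-(p+1)\ge p+2>\alpha(G)$, a contradiction. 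Hence $\alpha(G)=p$, and since $|N(v)|=d(v)\ge p$ while $N(v)$ is independent, $G$ is $p$-regular and every $N(v)$ is a maximum independent set; note $G$ is not bipartite, as a bipartite graph on $n$ vertices has an independent set of size $\ge n/2>p$.

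Now comes the \textbf{structural analysis}. Fix a vertex $v_0$ and set $A=N(v_0)$, $B=\{v:N(v)=A\}$ (so $v_0\in B$), and $C=V(G)\setminus(A\cup B)$. Routine arguments give: $A,B$ are independent with $|A|=p$; $[A,B]$ is complete bipartite and $B$ has no other edges; $1\le|B|\le p-1$ (if $|B|=p$ then $B$ together with any vertex of $C$ is an independent set of size $p+1$); each $a\in A$ has exactly $q:=p-|B|$ neighbours in $C$; each $c\in C$ has a neighbour in $A$; $\alpha(G[C])=q$, with each $N_C(a)$ a maximum independent set of $G[C]$; and $G[C]$ is itself a triangle-free $[q+2,p]$-graph on $|C|=n-p-|B|\ge q+3$ vertices (pad a $(q+2)$-subset of $C$ with all of $B$). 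The target is then $G[C]\cong K_{q,q}$ with $q=p/2$. Granting it, the maximum independent sets of $K_{q,q}$ are exactly its two sides $C_1,C_2$; splitting $A=A^{(1)}\cup A^{(2)}$ according to which side equals $N_C(a)$, $p$-regularity forces $|B|=|A^{(1)}|=|A^{(2)}|=|C_1|=|C_2|=p/2$ (so $p$ is even), and shows that edges run exactly between cyclically consecutive members of $B,A^{(1)},C_1,C_2,A^{(2)}$; that is, $G=C_5^{(p/2)}$, whence $n=5p/2\ge 2p+3$ gives $p\ge 6$, and the small and odd values of $p$ are excluded automatically.

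The \textbf{main obstacle} is the claim $G[C]\cong K_{q,q}$. The $[q+2,p]$-density already constrains the parameters (taking a maximum independent set $J$ of $G[C]$ together with two further vertices of $C$ yields $p\le 2q+1$) and severely restricts non-bipartite structure in $C$, since a shortest induced odd cycle of $G[C]$ is also induced in $G$ and hence carries few edges on any $q+2$ of its vertices. Turning this into a proof that $G[C]$ is bipartite, and then that it is the \emph{balanced} complete bipartite graph, requires combining the density condition with the two global facts that the sets $N_C(a)$ cover $C$ and that each corresponding $a$ also sees $A$; this runs in the spirit of the Andr\'asfai--Erd\H{o}s--S\'os theorem but is made quantitative by the $[p+2,p]$-hypothesis, and it is the technical heart of the argument.
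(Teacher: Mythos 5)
This lemma is not proved in the paper at all: it is quoted from Zhan \cite{Zhan2024}, so there is no internal proof to compare against, and your proposal must be judged on its own. The routine parts are correct: the verification that $C_5^{(p/2)}$ is triangle-free and satisfies the hypotheses; and, in the converse direction, the chain $\alpha(G)\le p+1$, the elimination of $\alpha(G)=p+1$ via the independence of $V(G)\setminus I$, hence $\alpha(G)=p$, $p$-regularity and non-bipartiteness, and the decomposition $V(G)=A\cup B\cup C$ with $A=N(v_0)$, $B=\{v:N(v)=A\}$, together with the derived facts ($B$ completely joined to $A$ and to nothing else, $1\le|B|\le p-1$, each $a\in A$ having exactly $q=p-|B|$ neighbours in $C$, each $c\in C$ having a neighbour in $A$, $\alpha(G[C])=q$, and every $q+2$ vertices of $C$ spanning at least $p$ edges). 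Moreover, granting $G[C]\cong K_{q,q}$, your deduction that $G=C_5^{(p/2)}$, that $p$ is even, and that $n=5p/2\ge 2p+3$ forces $p\ge 6$ is sound.

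The genuine gap is that the claim $G[C]\cong K_{q,q}$ is never proved: you explicitly label it ``the main obstacle'' and ``the technical heart,'' offering only the bound $p\le 2q+1$ and an appeal to an Andr\'asfai--Erd\H{o}s--S\'os-type argument. But this claim is where the entire content of the lemma lives: one must show that the triangle-free graph $G[C]$, with independence number $q$ and the inherited density condition, is bipartite (nothing in your sketch rules out, say, an induced $C_5$ or $C_7$ inside $C$), and then that it is the \emph{complete balanced} bipartite graph (nothing rules out a missing edge between the two sides, or unbalanced sides), possibly using the global structure of $G$ rather than $G[C]$ alone, as you yourself concede. Since the rigidity forcing the blow-up structure, the parity of $p$, and the exclusion of small $p$ all funnel through this single unproven assertion, what you have is a correct reduction of the lemma to its hardest step together with a restatement of that step, not a proof.
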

A {\it bowtie} is the graph consisting of two triangles with one vertex identified and an \emph{$\ell$-dumbbell} is a graph obtained by connecting two triangles with a path of length~$\ell$.
See Figure~\ref{Figureeight}$(c)(d)$ for a depiction.

\begin{lemma}\label{Lemma-[5,3]-diamond-house}
Let $G$ be a $[5, 3]$-graph of order at least 9 with $\delta(G) \geq 3$. Then $G$ contains either a $\widetilde{C}_3$ or a $\widetilde{C}_4$.
\end{lemma}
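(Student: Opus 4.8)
The plan is to argue by contradiction, assuming $G$ contains neither a $\widetilde{C}_3$ nor a $\widetilde{C}_4$. Since $\widetilde{C}_3=K_4-e$ (the ``diamond''), being $\widetilde{C}_3$-free means that every edge of $G$ lies in at most one triangle; equivalently $G[N(v)]$ has maximum degree at most $1$ for every $v\in V(G)$. As $p=3$ is odd, Lemma~\ref{Zhan-triangle} shows $G$ is not triangle-free, so I would fix a triangle $T=\{a,b,c\}$ and set $A=N(a)\setminus\{b,c\}$, $B=N(b)\setminus\{a,c\}$, $C=N(c)\setminus\{a,b\}$ and $R=V(G)\setminus(T\cup A\cup B\cup C)$. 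The first step is to record the skeleton: diamond-freeness forces $A,B,C$ to be pairwise disjoint and each of $G[A],G[B],G[C]$ to have maximum degree at most $1$ (a vertex with two neighbours in $T$, or a vertex of $A$ with two neighbours in $A$, produces a diamond with the relevant vertices of $T$); $\widetilde{C}_4$-freeness forces that there is no edge between two of the sets $A,B,C$, since an edge $xy$ with $x\in A$, $y\in B$ gives a $\widetilde{C}_4$ on the $4$-cycle $a\,x\,y\,b$ with $c$ attached to the edge $ab$; and $\delta(G)\ge 3$ forces $A,B,C\neq\varnothing$.

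Next I would extract the main consequence of the $[5,3]$-property. For any $x\in A$, $y\in B$, $z\in C$ and $w\in R$, the only possible edges inside $\{a,x,y,z,w\}$ are $ax$ and the edges from $w$ to $\{x,y,z\}$, so $w$ must be adjacent to at least two of $x,y,z$; varying the transversal, each $w\in R$ is adjacent to \emph{all} of at least two of $A,B,C$. If some $w\in R$ is adjacent to all of $A$ and $A$ contains an edge $x_1x_2$, then $\{a,x_1,x_2,w\}$ induces a $\widetilde{C}_3$; hence any of $A,B,C$ covered by a vertex of $R$ is independent, so (as $R\neq\varnothing$) at least two of $A,B,C$ are independent --- say $A,B$. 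Then $N(x)\subseteq\{a\}\cup R$ for $x\in A$, so every vertex of $A$ has at least two neighbours in $R$ and $|R|\ge 2$. Moreover $G$ has no independent set of size $5$ (it is $[5,3]$), while $A\cup B\cup\{c\}$ is independent (as are $A\cup B\cup C$ when $C$ is independent, and $A\cup C'\cup\{b\}$ for an independent $C'\subseteq C$); these inequalities pin the profile $(|A|,|B|,|C|)$ down to one of $\{1,1,1\}$, $\{1,1,2\}$, $\{1,1,3\}$, $\{1,1,4\}$, $\{1,2,2\}$, with only $C$ possibly carrying an edge in the last three.

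Every profile except $\{1,1,1\}$ can then be killed by exhibiting a $5$-set spanning at most two edges: $\{a\}\cup A\cup B\cup C$ for $\{1,1,2\}$; $\{a,x,y,z_1,z_2\}$ with $z_1z_2$ an edge of $C$ for $\{1,1,3\}$ and $\{1,1,4\}$; and $\{a,x,y_1,y_2,z_1\}$ for $\{1,2,2\}$. The profile $\{1,1,1\}$ is where the hypothesis $n\ge 9$ enters: now $|R|\ge 3$, so for three distinct $w_1,w_2,w_3\in R$ the set $\{b,c,w_1,w_2,w_3\}$ spans only $1+|E(G[\{w_1,w_2,w_3\}])|$ edges, forcing $G[\{w_1,w_2,w_3\}]$ to have at least two edges; thus every three vertices of $R$ span at least two edges, i.e. $G[R]$ is the complement of a matching. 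Applying ``$G[N(v)]$ has maximum degree $\le 1$'' to $v\in\{x,y,z\}$ then shows each of $x,y,z$ has exactly two neighbours in $R$, so $2|R|\le 6$, giving $|R|=3$; hence $G[R]\in\{K_3,P_3\}$ and each $w_i$ is adjacent to exactly two of $x,y,z$. If $G[R]=K_3$, the three $w_i$ realize the three distinct pairs from $\{x,y,z\}$, and then two of them together with their common neighbour in $\{x,y,z\}$ and the third $w_i$ induce a diamond. If $G[R]=P_3$, say $w_1w_2w_3$, then $w_1w_2$ lies in a triangle $\{x,w_1,w_2\}$ (relabelling so $x$ is the common neighbour of $w_1,w_2$), and tracing the forced adjacencies ($w_2\nsim y$, hence $w_3\sim y$, where $y$ is the other $\{x,y,z\}$-neighbour of $w_1$) yields a $\widetilde{C}_4$ on $\{x,w_1,w_2,y,w_3\}$: the triangle $x\,w_1\,w_2$ together with the path $w_1\,y\,w_3\,w_2$. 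In every case we reach a contradiction.

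The main obstacle is not a single hard step but the bookkeeping: one must carry out the size-bound computation carefully to see that exactly the five profiles survive, and then push the $R$-analysis of the profile $\{1,1,1\}$ through while keeping track of which adjacencies are forced. It is worth noting that $n\ge 9$ is sharp: the cube $Q_3$, and the $3$-regular graph on $8$ vertices obtained from $K_{2,3}$ with parts $\{w_1,w_2\}$ and $\{x,y,z\}$ by adding a triangle $\{a,b,c\}$ and the matching $\{ax,by,cz\}$, are both $[5,3]$-graphs with $\delta=3$ containing neither a $\widetilde{C}_3$ nor a $\widetilde{C}_4$; the second of these realizes the profile $\{1,1,1\}$ with $|R|=2$, which is exactly the configuration excluded once $n\ge 9$.
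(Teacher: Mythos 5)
Your argument is essentially correct, and it takes a genuinely different route from the paper. The paper's proof first establishes two structural claims (that $G$ is bowtie-free and $1$-dumbbell-free), then grows a triangle into an explicit eight-vertex configuration $F$ with forced adjacencies and derives a contradiction from the ways a ninth vertex can attach to $F$. You instead partition $V(G)$ around a triangle $T=\{a,b,c\}$ into $A,B,C,R$ and let the $[5,3]$-condition, applied to well-chosen $5$-sets, force the structure; this avoids the bowtie and dumbbell claims altogether, and your endgame ($|A|=|B|=|C|=1$, $|R|=3$, the bipartite graph between $R$ and $\{x,y,z\}$ is $2$-regular on both sides, hence $G[R]\in\{K_3,P_3\}$ and in either case a diamond or a $\widetilde{C}_4$ appears) is correct and arguably cleaner. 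The sharpness examples on $8$ vertices are a nice bonus not present in the paper.

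One step is stated out of order: when you conclude that at least two of $A,B,C$ are independent, you invoke $R\neq\varnothing$, but at that point nothing you have recorded rules out $R=\varnothing$ with $|A|+|B|+|C|\ge 6$ (for instance, $\alpha(G)\le 4$ together with the disjointness, matching and no-cross-edge facts does not by itself exclude $|A|=|B|=|C|=2$ with each set inducing an edge), and your subsequent profile list depends on that independence. The repair is a one-liner in exactly your own style and in fact streamlines the middle of the proof: if, say, $x_1,x_2\in A$ are distinct, then for any $y\in B$, $z\in C$ the $5$-set $\{c,x_1,x_2,y,z\}$ spans at most the two edges $x_1x_2$ and $cz$, contradicting the $[5,3]$-property. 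Hence $|A|=|B|=|C|=1$ from the outset, the whole profile analysis becomes unnecessary, and $|R|=n-6\ge 3$ is immediate, after which your $\{1,1,1\}$ analysis finishes the proof as written.
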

\begin{proof}[\bf Proof]
To the contrary, assume that $G$ is a $[5, 3]$-graph of order at least 9 with $\delta(G) \geq 3$ that contains neither a $\widetilde{C}_3$ nor a $\widetilde{C}_4$.

\begin{claim}\label{Claim-bowtie}
$G$ is bowtie-free.
\end{claim}

Otherwise, suppose that $G$ contains a bowtie $B$. For convenience, let the two triangles of $B$ be $xx_1x_2x$ and $xx_3x_4x$, where $x$ is the common vertex.  Since $\delta(G)\geq 3$ and $G$ is $\widetilde{C}_3$-free, there exist $y_1,y_2\in V(G)\setminus V(B)$ such that $x_1\sim y_1$ and $x_2\sim y_2$.   Since $G$ contains neither a $\widetilde{C}_3$ nor a $\widetilde{C}_4$, $G[\{x_1,y_1,y_2,x_3,x_4\}]$ has size at most 2, a contradiction.  This proves Claim~\ref{Claim-bowtie}.

\begin{claim}\label{Claim-dumbbell}
$G$ is $1$-dumbbell-free.
\end{claim}

Otherwise, suppose that $G$ contains a dumbbell $D$. For convenience, let the two triangles of $D$ be $x_1x_2x_3x_1$ and $y_1y_2y_3y_1$, where $x_1y_1$ is an edge. Since $G$ is $\widetilde{C}_3$-free and $\delta(G)\geq 3$, there exist two distinct vertices $x_2^{\prime},x_3^{\prime}\in V(G)\setminus V(D)$ such that $x_2\sim x_2^{\prime}$ and $x_3\sim x_3^{\prime}$. Since $G[\{x_2^{\prime},x_3^{\prime},x_1,y_1,y_2\}]$ has size at least 3, one has
$x_2^{\prime}\sim y_2$ or $x_3^{\prime}\sim y_2$.
 Without loss of generality, let $x_2^{\prime}\sim y_2$.
Note that $x_2^{\prime}$ cannot be adjacent to both $y_2$ and $y_3$ simultaneously, otherwise there exists a $\widetilde{C}_3$.
 Since
$G[\{x_1,y_1,y_3,x_2^{\prime},x_3^{\prime}\}]$ has size at least 3, we have $x_3^{\prime}\sim y_3$.
Since $\delta(G)\geq 3$ and $|V(G)|\geq9$, there exist a vertex $z\in V(G)\setminus (V(D)\cup\{x_2^{\prime},x_3^{\prime}\})$ such that $z\sim x_2^{\prime}$.
Since $G$ is $(\widetilde{C}_3,\widetilde{C}_4,\text{bowtie})$-free, $G[\{x_1,x_2,y_2,y_3,z\}]$ has size 2, a contradiction. This proves Claim~\ref{Claim-dumbbell}.

\medskip

By Lemma~\ref{Zhan-triangle}, $G$ contains a triangle $E$, say $v_1 v_2 v_3 v_1$. Since $\delta(G)\geq 3$ and $G$ is $\widetilde{C}_3$-free, there exist distinct $v_4,v_5,v_6\in V(G)\setminus V(E)$ such that $v_1\sim v_4$, $v_2\sim v_5$ and $v_3\sim v_6$. Since $\delta(G)\geq 3$ and $G$ is $(\widetilde{C}_3,\widetilde{C}_4)$-free, there exist $v_7,v_8\in V(G)\setminus (V(E)\cup\{v_4,v_5,v_6\})$ such that $\{v_7,v_8\}\subseteq N(v_4)$.
Since $G[\{v_1,v_5,v_6,v_7,v_8\}]$ has size at least 3, $|[\{v_5,v_6\},\{v_7,v_8\}]|\geq 3$. Without loss of generality, let $\{v_7,v_8\}\subseteq N(v_5)$ and $v_6\sim v_7$. Since $G[\{v_1,v_2,v_6,v_7,v_8\}]$ has size at least 3, it follows $v_6\sim v_8$.

\medskip

Denote $F=G[\{v_1,v_2,v_3,v_4,v_5,v_6,v_7,v_8\}]$. Since $|V(G)|\geq 9$ and $G$ is connected, there exists a vertex $v_9\in V(G)\setminus V(F)$ such that $v_9$ is adjacent to at least one vertex of $V(F)$. By symmetry, we can only consider the following three situations.
If $v_9\sim v_8$, then by Claim~\ref{Claim-dumbbell}, $v_9\notin N(\{v_4,v_5,v_6\})$, so $G[\{v_1,v_4,v_5,v_6,v_9\}]$ has size at most 2, a contradiction.
If $v_9\sim v_4$, then $v_9\notin N(\{v_7,v_8\})$ (Claim~\ref{Claim-dumbbell}), hence $G[\{v_2,v_3,v_7,v_8,v_9\}]$ has size 1, a contradiction.
If $v_9\sim v_1$, then $v_9\nsim v_4$ (Claim~\ref{Claim-bowtie}), so $G[\{v_3,v_4,v_5,v_6,v_9\}]$ has size 1, a contradiction.
This completes the proof of Lemma~\ref{Lemma-[5,3]-diamond-house}.
\end{proof}
\begin{lemma}\label{lem:distance}
Let $G$ be a connected $[p+2, p]$-graph, and let $S$ and $T$ be two disjoint vertex subsets of $G$ with $|S| \geq 2$ and $|T| \geq 2$. Then the shortest $(S,T)$-path in $G$ has length at most $p$.
\end{lemma}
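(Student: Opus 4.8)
The plan is to argue with a shortest $(S,T)$-path. Since $G$ is connected and $S,T$ are nonempty and disjoint, a shortest $(S,T)$-path $P=v_0v_1\cdots v_k$ exists, with $v_0\in S$, $v_k\in T$, all of $v_1,\dots,v_{k-1}$ outside $S\cup T$, and $k\ge 1$. Assuming for contradiction that $k\ge p+1$, I will exhibit a set of $p+2$ vertices inducing at most $p-1$ edges, which is impossible in a $[p+2,p]$-graph.

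The first step is to record the structural consequences of the minimality of $P$. A chord $v_iv_j\in E(G)$ with $j\ge i+2$ would yield a strictly shorter $(S,T)$-path, so $P$ is an induced path; in particular $v_0\nsim v_i$ for $2\le i\le k$ and $v_k\nsim v_i$ for $0\le i\le k-2$. Since the internal vertices of $P$ avoid $S\cup T$ and $S\cap T=\emptyset$, we get $V(P)\cap S=\{v_0\}$ and $V(P)\cap T=\{v_k\}$, and no edge of $G$ joins $S$ to $T$ (such an edge would be an $(S,T)$-path of length $1<k$). Finally, for each $s\in S$ one has $s\nsim v_i$ for every $i$ with $2\le i\le k$ --- otherwise $s\,v_i\,v_{i+1}\cdots v_k$ is a shorter $(S,T)$-path --- and dually each $t\in T$ satisfies $t\nsim v_i$ for every $i$ with $0\le i\le k-2$.

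Next I would fix $s\in S\setminus\{v_0\}$ and $t\in T\setminus\{v_k\}$, which exist since $|S|,|T|\ge 2$; by the above $s,t\notin V(P)$ and $s\ne t$. Take $X=\{v_0,v_k\}\cup\{v_2,v_3,\dots,v_{p-1}\}\cup\{s,t\}$. As $k\ge p+1$, the listed vertices of $P$ are pairwise distinct, so $|X|=p+2$. Within the part of $X$ on $P$, namely $\{v_0,v_k\}\cup\{v_2,\dots,v_{p-1}\}$, the only edges are the $p-3$ path edges $v_2v_3,\dots,v_{p-2}v_{p-1}$: indeed $P$ is induced, and the neighbours $v_1$ of $v_0$ and $v_{k-1}$ of $v_k$ on $P$ are not in $X$. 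By the non-adjacency facts above, inside $X$ the vertex $s$ is adjacent at most to $v_0$, the vertex $t$ at most to $v_k$, and $s\nsim t$. Therefore $|E(G[X])|\le (p-3)+1+1=p-1<p$, contradicting the $[p+2,p]$-property; hence $k\le p$.

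I expect the decisive point to be this last choice of $X$ and the accompanying count: one must select the $p+2$ vertices so that the part lying on $P$ is as sparse as possible while $s$ and $t$ stay essentially isolated. Keeping the endpoints $v_0,v_k$ (rather than their path-neighbours $v_1,v_{k-1}$) is exactly what reduces the $P$-part to $p-3$ edges, and the non-adjacency facts of the second step are precisely what caps the contribution of $s$ and $t$ at one edge apiece; the total must land at $p-1$, not $p$, which is tight. This is also where the hypotheses enter: $|S|,|T|\ge 2$ supply $s$ and $t$, and the argument needs $p\ge 3$ for the index range $v_2,\dots,v_{p-1}$ and the count $|X|=p+2$ to make sense --- and indeed the statement fails for $p=2$, e.g.\ for the graph consisting of triangles on $\{v_0,v_1,v_2\}$ and $\{v_3,v_4,v_5\}$ joined by the edge $v_2v_3$, with $S=\{v_0,v_1\}$ and $T=\{v_4,v_5\}$.
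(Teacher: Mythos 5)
Your proof is correct and takes essentially the same approach as the paper: the paper likewise fixes a shortest $(S,T)$-path $x_1x_2\ldots x_m$, picks two vertices from each of $S$ and $T$, and observes that together with the interior segment $x_3,\ldots,x_p$ (your $v_2,\ldots,v_{p-1}$) they induce at most $(p-3)+1+1=p-1$ edges, contradicting the $[p+2,p]$-condition. Your caveat that the count requires $p\ge 3$ (with the $p=2$ counterexample) is a fair observation, also implicit in the paper's argument, which only invokes the lemma for $p=3$.
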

\begin{proof}[\bf Proof]
	To the contrary, suppose that any $(S,T)$-path in $G$ has length at least $p+1$. Let $P=x_1x_2\ldots x_m$ be a shortest $(S,T)$-path, where $m\geq p+2$. Choose any vertices $u_1,u_2\in S$ and $w_1,w_2\in T$.  Then $G[\{u_1,u_2,w_1,w_2,x_3,x_4,\ldots,x_{p}\}]$ has size most $p-1$, a contradiction. This completes the proof of Lemma~\ref{lem:distance}.
\end{proof}

The central gadget we use in our proof is given by the following definition, which was initially introduced in \cite{Draganic2024}.
\begin{definition}[Dragani\'{c}-Correia-Sudakov \cite{Draganic2024}]
	A $(t,s,\ell)$-switch in $G$ is a subgraph $R$ which consists of a path $P=v_1,v_2,\ldots,v_{\ell+1}$ together with the vertex $y$ adjacent to vertices $t,t+1,\ldots,t+s$ with $t,s\geq 1.$ We also write $(t,\cdot)$-switch to denote a switch for which the $s$ is not specified and the length of $P$ is $\ell$.
\end{definition}

\begin{figure}[ht]
	\centering
	\begin{tikzpicture}[scale=1.05,
		main node/.style={circle,draw,color=black,fill=black,inner sep=0pt,minimum width=4.5pt}
		]
		\node[main node] (v1) at (2,0) [label=below:$v_{1}$]{};
		\node[main node] (vl1) at (12,0) [label=below:$v_{\ell+1}$]{};
		
		\node[main node] (vt1) at (5,0) [label=below:$v_{t}$]{};
		\node[main node] (vt2) at (6,0) [label=below:$v_{t+1}$]{};
		\node[main node] (vt3) at (7,0) [label=below:$v_{t+2}$]{};
		\node[main node] (vt4) at (8,0) [label=below:$v_{t+3}$]{};
		\node[main node] (vt5) at (9,0) [label=below:$v_{t+4}$]{};
		
		\node[main node] (y) at (7,1) [label=above:$y$]{};
		
		\draw[dashed,line width=1pt] (vt5) -- (vl1);
        \draw[dashed,line width=1pt] (v1) -- (vt1);
		\draw[line width=1pt] (y) -- (vt1);
        \draw[line width=1pt] (vt1) -- (vt2);
        \draw[line width=1pt] (vt2) -- (vt3);
        \draw[line width=1pt] (vt3) -- (vt4);
        \draw[line width=1pt] (vt4) -- (vt5);
		\draw[line width=1pt] (y) -- (vt2);
		\draw[line width=1pt] (y) -- (vt3);
		\draw[line width=1pt] (y) -- (vt4);
		\draw[line width=1pt] (y) -- (vt5);
	\end{tikzpicture}
	\caption{A $(t,4,\ell)$-switch.}\label{(t,4)-switch}
\end{figure}
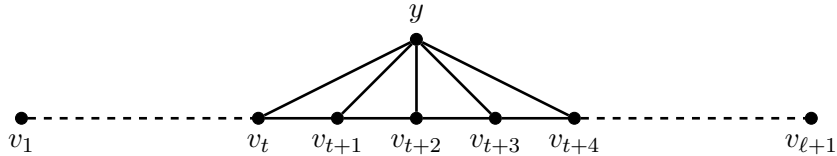

Figure \ref{(t,4)-switch} illustrates a  $(t,4,\ell)$-switch. If a $\widetilde{C}_{\ell}$ (not necessarily induced) is in a graph $G$ of order $n$ for $\ell< n-1$, then there is an edge between $\widetilde{C}_{\ell}$ and a vertex $v$ outside of $\widetilde{C}_{\ell}$, which evidently produces a $(t,1,\ell)$-switch whose path starts at $v$.

\section{Proof of Theorem~\ref{thm:5-3}}\label{sec4}

The aim of this section is to prove Theorem~\ref{thm:5-3}. By Lemma~\ref{Lemma-[5,3]-diamond-house}, $G$ contains a $\widetilde{C}_{3}$ or $\widetilde{C}_{4}$, hence Theorem~\ref{thm:5-3} can be deduced from the following theorem.

\begin{theorem}\label{thm:l-cycle-expansion}
Let $G$ be a $2$-connected $[5,3]$-graph of order $n\geq 9$ with $\delta(G)\geq 3$. If $G$ contains a $\widetilde{C}_{\ell}$ for some $\ell<n-1$, then $G$ also contains $\widetilde{C}_{\ell+1}$ or $\widetilde{C}_{\ell+2}$.

\end{theorem}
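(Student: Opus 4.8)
## Proof Proposal

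The plan is to use the $\widetilde{C}_{\ell}$ to produce a switch and then argue that a switch with suitable parameters forces one of $\widetilde{C}_{\ell+1}$, $\widetilde{C}_{\ell+2}$ — this is the mechanism of Draganić, Correia, and Sudakov. First I would set up notation: write the given $\widetilde{C}_{\ell}$ as a cycle $C$ on vertices $c_1 c_2 \cdots c_\ell c_1$ with an extra vertex $w$ adjacent to $c_1$ and $c_2$. Since $\ell < n-1$, there are at least two vertices outside $C$; using $2$-connectivity (or just $\delta(G)\ge 3$ and connectivity), there is a vertex $v \notin V(\widetilde{C}_\ell)$ with an edge into $\widetilde{C}_\ell$, and the last remark in the excerpt hands us a $(t,1,\ell)$-switch whose path starts at $v$. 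The goal is to promote this to a switch long enough, or with enough ``width'' $s$, that the switch-rotation argument closes up a cycle of length $\ell+1$ or $\ell+2$ with a triangle still hanging off it.

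Next I would make the $[5,3]$ condition do the real work. The point of the hypothesis is that any five vertices span at least three edges, so in particular no five vertices can be nearly independent; this is exactly what forces the switch parameter $s$ (the number of consecutive path-vertices a single vertex $y$ sees) to be reasonably large, or forces extra chords among the path and the outside vertices. Concretely, I would take the path $P = v = v_1, v_2, \ldots$ obtained from the switch and a shortest $(S,T)$-type path into the rest of $G$, invoke Lemma~\ref{lem:distance} to bound relevant path lengths by $p = 3$, and then repeatedly apply the $[5,3]$-condition to quintuples consisting of a few path vertices plus a few cycle vertices to extract the chords needed to reroute. The switch gadget is designed so that if $y$ is adjacent to $v_t, v_{t+1}, \ldots, v_{t+s}$ with $s \ge 1$, one can ``pivot'' at $y$: replace the subpath $v_t \cdots v_{t+s}$ by a detour through $y$, changing the length of the traversed segment by one while keeping a triangle (namely $y v_j v_{j+1}$ for an unused consecutive pair) available as the pendant. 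Carrying this out cleanly, with the bookkeeping of which pendant triangle survives, is the heart of the argument.

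The main obstacle I expect is the case analysis when the switch is \emph{short} and \emph{narrow} — i.e., when the outside vertices attach to $C$ in a very restricted pattern and $\delta(G)\ge 3$ barely gives us room. In that regime the $[5,3]$-condition has to be applied to carefully chosen quintuples (mixing the pendant vertex $w$, the attachment vertices, and the first couple of vertices of $P$) to force either an additional chord on $C$ near $c_1, c_2$ (which directly yields $\widetilde{C}_{\ell+1}$ by absorbing $v$ or $w$ appropriately) or an edge between two outside vertices (which lets us absorb a path of length $2$, giving $\widetilde{C}_{\ell+2}$). I would organize this as: \textbf{Case 1}, the switch has $s \ge 2$ or $\ell+1 \le$ length of available path, handled uniformly by the pivot; \textbf{Case 2}, every outside attachment is a single vertex seeing exactly one vertex of $C$, where I would use $2$-connectivity to find two vertex-disjoint outside attachments and then apply $[5,3]$ to the five vertices formed by the two outside vertices and three consecutive cycle vertices separating their attachment points, forcing a chord that shortcuts $C$ and frees up room to insert one or two vertices. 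Throughout, I would lean on the already-proved structural lemmas (Lemma~\ref{Zhan-triangle} to keep triangles around, Lemma~\ref{Lemma-[5,3]-diamond-house} implicitly via the hypothesis) and treat the whole thing as an amortized extension argument rather than trying to jump from $\ell$ to $\ell+2$ in one move.
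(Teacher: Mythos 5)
Your proposal correctly identifies the general mechanism of the paper (take a $(t,s,\ell)$-switch in the spirit of Dragani\'c--Correia--Sudakov and use the $[5,3]$-condition on well-chosen quintuples, together with Lemma~\ref{lem:distance}, to force chords that let you lengthen the structure), but as written it is a plan rather than a proof, and the part you defer --- ``carrying this out cleanly, with the bookkeeping of which pendant triangle survives'' --- is precisely where all the work lies. Two concrete gaps stand out. First, your ``pivot at $y$'' only changes the length of a traversed \emph{segment}; it does not by itself close a cycle of length $\ell+1$ or $\ell+2$. The paper gets the closing edges by an extremal choice of the switch ($t$ minimized, then $s$ maximized, then the smallest index of a neighbour of $v_{\ell+1}$ minimized), so that every forced chord either closes the desired $\widetilde{C}_{\ell+1}$/$\widetilde{C}_{\ell+2}$ or produces a ``better'' switch, a contradiction. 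Without such an extremality convention, the chords you extract from the $[5,3]$-condition need not be usable, and your assertion that the condition forces $s$ to be ``reasonably large'' is simply not true --- the hardest part of the paper's argument is exactly the narrow case $s=1$.

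Second, your case split (wide/long switch versus ``every outside attachment sees exactly one vertex of $C$'') does not match where the difficulty actually sits, and it omits structural facts the argument cannot do without. The paper first proves, before any case distinction, that $G$ is $(\ell-3)$-, $(\ell-2)$- and $(\ell-1)$-dumbbell-free (Claims~\ref{Claim-l-3-dumbbell}--\ref{Claim-l-1-dumbbell}); these claims are invoked constantly to rule out configurations that your quintuple arguments would otherwise produce, and they themselves require a nontrivial shortest-$(S,T)$-path analysis via Lemma~\ref{lem:distance} and $2$-connectivity. The genuine case split is on the position of the triangle in the switch ($t=1$ versus $t\ge 2$), each branch needing a chain of non-adjacency claims (e.g.\ $v_1\nsim v_3$, $v_1$ has no two consecutive neighbours on $P$, $v_{\ell+1}\nsim y$, $v_1\nsim y$, $|N_{\overline{R}}(v_1)|\le 1$) before the final contradiction. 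Also, Lemma~\ref{Zhan-triangle} plays no role here (it is only used to prove Lemma~\ref{Lemma-[5,3]-diamond-house}), so leaning on it ``to keep triangles around'' is misplaced. In short, the strategy is the right one, but the proposal does not yet contain the argument that makes it work.
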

\begin{proof}[\bf Proof]
To the contrary, suppose that $G$ contains a $\widetilde{C}_{\ell}$ but contains neither a $\widetilde{C}_{\ell+1}$ nor a $\widetilde{C}_{\ell+2}$.  We fix such an $\ell$ until the end of the proof. Since $G$ contains a $\widetilde{C}_\ell$, it contains an $(t, s,\ell)$-switch for some integers $t$ and $s$. Let $R$ be a $(t, s, \ell)$-switch consisting of a path $P=v_1v_2\ldots v_{\ell+1}$ together with the vertex $y$. Furthermore, we assume that $t$ is minimized and $s$ is maximized with respect to $t$, and among all such choices, the smallest subscript of the neighbor of $v_{\ell+1}$ on the path $P$ is made as small as possible.

\begin{claim}\label{cliam:l-size}
$\ell\geq 4$.
\end{claim}
Otherwise, assume that $\ell\leq 3$. Note that $G$ contains a $\widetilde{C}_{3}$ or $\widetilde{C}_{4}$.
That is, $G$ contains a $\widetilde{C}_{3}$ and $\ell=3$. By the choice of $R$, we have that $R$ is a $(1,2,3)$-switch. Since $G$ contains neither $\widetilde{C}_{4}$ nor $\widetilde{C}_{5}$ and $\delta(G)\geq 3$,  $N_{\overline{R}}(v_4)\neq \emptyset$ and $N(\{y,v_1,v_2\})\cap N_{\overline{R}}[v_4]=\emptyset$. Let $v_5\in N_{\overline{R}}(v_4)$. Since $G$ is $2$-connected, there exists a $(\{v_1,v_2,y\},\{v_4,v_5\})$-path in $G-v_3$.  Let $P_{v_3}$ be such a shortest path. Since $[\{y,v_1,v_2\}, \{v_4,v_5\}]=\emptyset$, by a Similar analysis as Lemma~\ref{lem:distance}, $P_{v_3}$ has length at most $3$.
For convenience, let $P_{v_3} = u_1u_2\ldots u_k$ with $3\le k\le 4$. We consider the following two situations:
\begin{itemize}
\item $k=3$. Without loss of generality, let $u_1\in \{v_1,y\}$ and $u_3=v_5$. Then the edge $u_1u_2$ always creates a $\widetilde{C}_{5}$, a contradiction.
\item $k=4$. If $u_3\sim v_4$, then the situation is similar to the case $k=3$. Therefore, $u_4=v_5$.
We assert that $|N(u_2)\cap \{v_1,v_2,y\}|\geq 2$. Otherwise, the two vertices not adjacent to $u_2$, together with $\{u_2,v_4,v_5\}$, induce a subgraph of size at most $2$, a contradiction. On the one hand, if $v_1\sim v_3$, then this always creates a $\widetilde{C}_{4}$, a contradiction. On the other hand, if $v_1\sim u_2$, then this also always creates a $\widetilde{C}_{4}$, a contradiction. Therefore, $N(v_1)\cap \{u_2,v_3\}=\emptyset.$ If $u_2\sim v_3$, then the cycle $yu_2v_3v_2y$ together with $v_1$ forms a $\widetilde{C}_{4}$, a contradiction. If $v_3\sim v_5$, then the cycle $yu_2u_3v_5v_3y$ together with $v_2$ forms a $\widetilde{C}_{5}$, a contradiction. This implies that $G[\{v_1,v_3,v_4,u_2,u_4\}]$ has size at most $2$, a contradiction. This proves Claim~\ref{cliam:l-size}.
\end{itemize}

\begin{claim}\label{Claim-l-3-dumbbell}
    $G$ is $(\ell-3)$-dumbbell-free.
\end{claim}
Otherwise, assume that $G$ contains a $(\ell-3)$-dumbbell $D$. For convenience, suppose that the two triangles of $D$ are $v_1v_2v_3v_1$ and $v_{\ell}v_{\ell+1}v_{\ell+2}v_{\ell}$, and the path $P$ connecting them is $v_3\ldots v_{\ell}$. Furthermore, we consider the following ordering of its vertices: $v_1v_2\ldots v_{\ell+2}$. Since $G$ contains neither $\widetilde{C}_{\ell+1}$ nor $\widetilde{C}_{\ell+2}$, we have
$[\{v_1,v_2\},\,\{v_{\ell+1},v_{\ell+2}\}] = \emptyset$,
$N_P^-(\{v_1,v_2\}) \cap N(\{v_{\ell+1},v_{\ell+2}\}) = \emptyset$,
and $N_P^+(\{v_{\ell+1},v_{\ell+2}\}) \cap N(\{v_1,v_2\}) = \emptyset$.

\medskip

Now, we choose $v_i\in N_P(\{v_1,v_2\})$ such that $i$ is as large as possible. By symmetry, assume that $v_1\sim v_i$.  If $i< \ell$, then for every $r\in [i+1,\ell]$, $G[\{v_1,v_2,v_{\ell+1},v_{\ell+2},v_r\}]$ has size at least $3$, which implies $v_r \in N(\{v_{\ell+1},v_{\ell+2}\})$. For $i\ge 4$, since $G[\{v_1,v_2,v_{i-1},v_{\ell+1},v_{\ell+2}\}]$ has size at least $3$, $v_{i-1} \in N(\{v_{1},v_{2}\})$. For  $i-2\ge 3$, $G[\{v_1,v_2,v_{i-2},v_{\ell+1},v_{\ell+2}\}]$ has size at least $3$, which implies $v_{i-2} \in N(\{v_{1},v_{2}\})$.
 Repeating the process over and over again, it follows that $\{v_3,\ldots,v_i\}\subseteq N(\{v_1,v_2\})$.
Now, denote $S = \{v_1, v_2, \ldots, v_{i-1}\}$ and $T = \{v_{i+2}, \ldots, v_{\ell+2}\}$.  Furthermore, we define the set $X$ as follows:
\[
X =
\begin{cases}
	\{v_i\} & \text{if } i = \ell, \\
	\{v_i, v_{i+1}\} & \text{otherwise}.
\end{cases}
\]

\medskip

Suppose that $G-X$ is connected.  There exists an $(S, T)$-path in $G-X$. Without loss of generality, let $P_1$ be a shortest $(S, T)$-path. By Lemma~\ref{lem:distance}, $P_1$ has length at most $3$. For convenience, let $P_1 = u_1u_2\ldots u_s$ with $2\le s\le 4$.   We consider the following three situations:
\begin{itemize}
\item $k=2$. By symmetry, we may assume that $u_1^+\sim v_1$ and $u_2^-\sim v_{\ell+2}$.  This always creates a $\widetilde{C}_{\ell+1}$, see Figure \ref{fig-claim-3.1}$(a)$.
\item $k=3$. Since $G[\{v_1,v_2,u_2,v_{\ell+1},v_{\ell+2}\}]$ has size at least three, $N(u_2)\cap \{v_1,v_2,v_{\ell+1},v_{\ell+2}\}\neq \emptyset$. Without loss of generality, we may assume that $u_1=v_1$.  The edge  $u_2u_3$ always creates a  $\widetilde{C}_{\ell+1}$, see Figure \ref{fig-claim-3.1}$(b)$.
    \item $k=4$. Since $G[\{v_1,v_2,u_2,v_{\ell+1},v_{\ell+2}\}]$  and $G[\{v_1,v_2,u_3,v_{\ell+1},v_{\ell+2}\}]$  have size at least three, $N(u_2)\cap \{v_1,v_2,v_{\ell+1},v_{\ell+2}\}\neq \emptyset$ and $N(u_3)\cap \{v_1,v_2,v_{\ell+1},v_{\ell+2}\}\neq \emptyset$. Without loss of generality, we may assume that $u_1=v_1$ and $u_4=v_{\ell+2}$. This creates a $\widetilde{C}_{\ell+2}$, see Figure \ref{fig-claim-3.1}$(c)$.
\end{itemize}
 Therefore, $G-X$ is  not connected.  Then $i\neq \ell$. Note that $G$ is $2$-connected.
 Let $P_2$ be a shortest $(S, T)$-path in $G-v_i$, and let $P_3$ be a shortest $(S, T)$-path in $G-v_{i+1}$. For convenience,  we may assume that $P_2=u_1u_2\ldots u_k$ and   $P_3=w_1w_2\ldots w_r$.    By Lemma~\ref{lem:distance}, $\max\{k,r\}\leq 4$.  Since $G-X$ is not connected, $u_{k-1}=v_{i+1}$ and $w_{r-1}=v_{i}$. However, This always creates a  $\widetilde{C}_{\ell+1}$ or  $\widetilde{C}_{\ell+2}$  (see Figure \ref{fig-claim-3.1}$(d)(e)(f)$). This proves Claim~\ref{Claim-l-3-dumbbell}.

\begin{figure}[ht!]
\centering
    \includegraphics[width=0.95\linewidth]{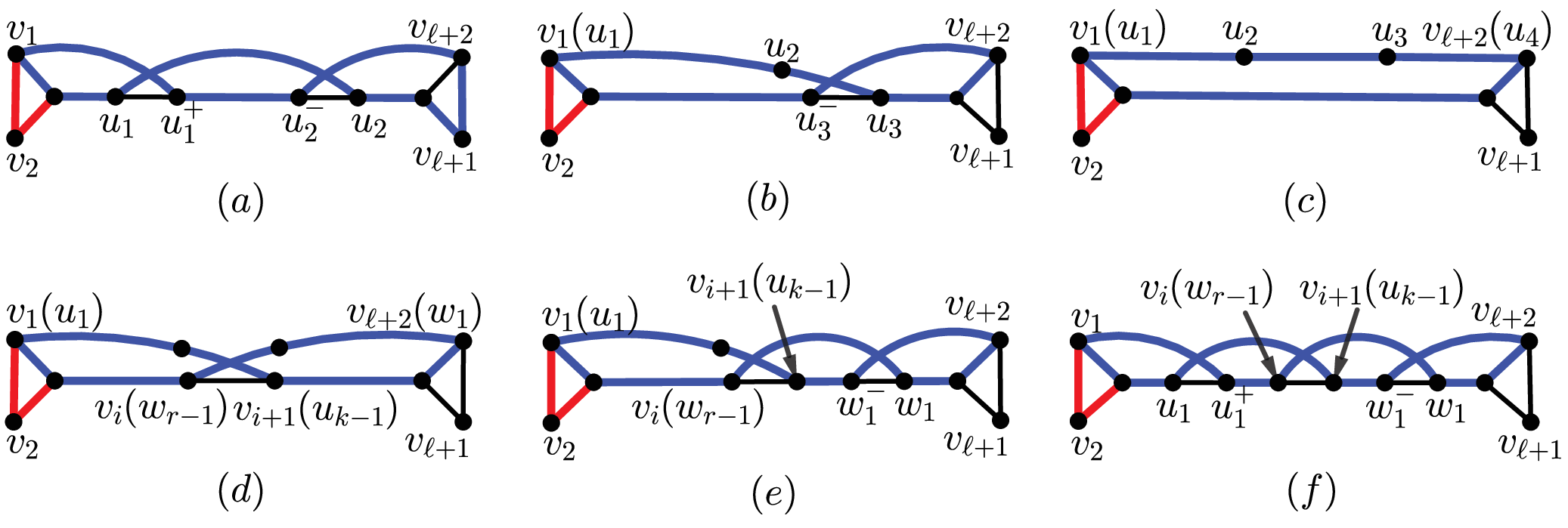}
\caption{Illustrations of Claim~\ref{Claim-l-3-dumbbell}.}
\label{fig-claim-3.1}
\end{figure}

\begin{claim}\label{Claim-l-2-dumbbell}
$G$ is $(\ell-2)$-dumbbell-free.
\end{claim}
Otherwise, assume that $G$ contains a $(\ell-2)$-dumbbell $D$. For convenience, suppose that the two triangles of $D$ are $v_1v_2v_3v_1$ and $v_{\ell+1}v_{\ell+2}v_{\ell+3}v_{\ell+1}$, and the path $P$ connecting them is $v_3\ldots v_{\ell+1}$. Furthermore, we consider the following ordering of its vertices: $v_1v_2\ldots v_{\ell+3}$. Note that $N_P^-(\{v_1,v_2\})\cap N(\{v_{\ell+1},v_{\ell+2}\})=\emptyset$ and $N_P^+(\{v_{\ell+1},v_{\ell+2}\})\cap N(\{v_1,v_2\})=\emptyset$. By Claim~\ref{Claim-l-3-dumbbell}, $G$ is $(\ell-3)$-dumbbell-free, thus $v_4\notin N(\{v_1,v_2\})$ and $v_{\ell}\notin N(\{v_{\ell+2},v_{\ell+3}\})$. Since $G[\{v_1,v_2,v_4,v_{\ell+2},v_{\ell+3}\}]$ has size at least $3$, it gives $v_4\in N(\{v_{\ell+2},v_{\ell+3}\})$.   Next, we consider the subgraph induced by $\{v_1,v_2,v_{5},v_{\ell+2},v_{\ell+3}\}$,
which has size at least $3$. This implies that
$v_{5}\in N(\{v_{\ell+2},v_{\ell+3}\})$. Repeating the process over and over again, it follows that $v_{\ell}\in  N(\{v_{\ell+2},v_{\ell+3}\})$, a contradiction. This proves Claim~\ref{Claim-l-2-dumbbell}.

\medskip

By an argument similar to that of Claim~\ref{Claim-l-2-dumbbell},
we obtain the following claim.
\begin{claim}\label{Claim-l-1-dumbbell}
$G$ is $(\ell-1)$-dumbbell-free.
\end{claim}

 From now on, we will differentiate between two situations.
\subsection{Triangle at the start: \texorpdfstring{$t=1$}{t=1}}
Since $G$ does not contain a $\widetilde{C}_{\ell+1}$,
We have $N_{\overline{R}}[v_{\ell+1}] \cap N(\{v_1,y\})=\emptyset$.
We begin the proof of this section with the following claim.
\begin{claim}\label{claim:neighbor:l+1}
 $N_{\overline{R}}(v_{\ell+1})=\emptyset$.
\end{claim}
Otherwise, suppose that $N_{\overline{R}}(v_{\ell+1})\neq \emptyset$. Next, we consider two situations.
\begin{itemize}
    \item $|N_{\overline{R}}(v_{\ell+1})|\geq 2$. Let $v_{\ell+2},v_{\ell+2}^{\prime}\in N_{\overline{R}}(v_{\ell+1})$. By Claims~\ref{Claim-l-2-dumbbell} and~\ref{Claim-l-1-dumbbell}, we have that $\{v_{\ell},v_{\ell+2},v_{\ell+2}^{\prime}\}$  is an independent set. Since $G[\{v_1,y,v_{\ell},v_{\ell+2},v_{\ell+2}^{\prime}\}]$ has size at least $3$, we have $\{y,v_1\}\subseteq N(v_{\ell})$. Moreover, as $G[\{v_1,y,v_{\ell-1},v_{\ell+2},v_{\ell+2}^{\prime}\}]$ has size at least $3$, it follows that $\{y,v_1\}\subseteq N(v_{\ell-1})$. Repeating this argument, we obtain
\[
\{y,v_1\}\subseteq \bigcap_{i=3}^{\ell} N(v_i).
\]
Denote $S=\{v_1,y,v_2,\ldots,v_{\ell-1}\}$ and $T=\{v_{\ell+1},v_{\ell+2},v_{\ell+2}^{\prime}\}$.
Since $G-v_{\ell}$ is connected, there exists a $(S,T)$-path $P_5$ in $G-v_{\ell}$. Without loss of
generality, let $P_5$ be a shortest $(S,T)$-path. By Lemma~\ref{lem:distance}, $P_5$ has length at most $3$. Similar to the proof of Claim~\ref{Claim-l-3-dumbbell}, the path $P_5$ always creates either $\widetilde{C}_{\ell+1}$ or $\widetilde{C}_{\ell+2}$, a contradiction.

\item $|N_{\overline{R}}(v_{\ell+1})|=1$. Let $v_{\ell+2}\in N_{\overline{R}}(v_{\ell+1})$. Since $\delta(G)\ge 3$, $N(v_{\ell+1}) \cap \{v_2,\ldots,v_{\ell-1}\}\neq \emptyset$. Let $v_i,v_j \in N(v_{\ell+1}) \cap \{v_2,\ldots,v_{\ell-1}\}$. Furthermore, we choose $v_i$ such that $i$ is as large as possible and $j$ is as small as possible.  By Claim~\ref{Claim-l-3-dumbbell}, we have that $i\leq \ell-2$. Since $G[\{v_1,y,v_{i+1},v_{\ell+1},v_{\ell+2}\}]$ has size at least $3$, $v_{i+1}\sim v_{\ell+2}$. Repeating this argument, we obtain
\[
\{v_{i+1},\ldots v_{\ell-1}\}\subseteq  N(v_{\ell+2}).
\]
If $i\neq \ell-2$, then this creates an $(\ell-2)$-dumbbell (See Figure~\ref{fig-claim-5}$(a)$), contradicting Claim~\ref{Claim-l-2-dumbbell}. Then $i= \ell-2$. Since $G$ contains  neither  $\widetilde{C}_{\ell+1}$ nor  $\widetilde{C}_{\ell+2}$,
\begin{flalign*}
    N(\{v_1,y\})\cap \{v_{\ell-1},v_{\ell},v_{\ell+1},v_{\ell+2}\}=\emptyset.
\end{flalign*}

Now, we assert that the neighbors of $v_{\ell+1}$ on $P$ are consecutive. Otherwise, without loss of generality, we may assume that $v_{j+1}\nsim v_{\ell+1}$. This implies that $G[\{v_1,y,v_{j+1},v_{\ell-1},v_{\ell+1}\}]$ has size at most $2$, a contradiction. In fact, $j=\ell-2$. Indeed, if $j \le \ell-3$, then $G[\{v_1,y,v_{\ell-2},v_{\ell},v_{\ell+2}\}]$ has size at least $3$, which implies $v_{\ell-2} \sim v_{\ell+2}$. This creates an $(\ell-2)$-dumbbell (See Figure~\ref{fig-claim-5}$(b)$), a contradiction. By the choice of $R$, we have that $N(v_{\ell-1})\cap \{y,v_1,\ldots,v_{\ell-3}\}=\emptyset$. In fact, $\{y,v_1,\dots,v_{\ell-3}\}$ is a clique. Indeed, otherwise, $G[\{y,v_1,\dots,v_{\ell-3}\}]$ contains a subgraph $H$ of order $3$ with size at most $2$, and then $G[V(H)\cup \{v_{\ell-1},v_{\ell+1}\}]$ also has size at most $2$, a contradiction. If $v_{\ell}$ has a neighbor $v_{\ell}'$ in $\overline{R}$, then by Claims~\ref{Claim-l-3-dumbbell} and~\ref{Claim-l-2-dumbbell}, $G[\{v_1,y,v_{\ell-1},v_{\ell+1},v_{\ell}'\}]$ has size exactly $1$, a contradiction. Since $\delta(G)\geq 3$, then $v_\ell\sim v_{\ell-2}$. By symmetry, we have that $v_{\ell+2}\sim v_{\ell-2} $ (see Figure~\ref{claim:neighbor:l+1}$(b)$). Let $S=\{v_1,y,v_2,\ldots,v_{\ell-3}\}$, $T=\{v_{\ell-1},v_{\ell},v_{\ell+1},v_{\ell+2}\}$, and $X=\{v_{\ell-3},v_{\ell-2}\}$. By an argument identical to that in the proof of Claim~\ref{Claim-l-3-dumbbell}, we obtain a $ \widetilde{C}_{\ell+1} $ or $ \widetilde{C}_{\ell+2} $, a contradiction. This completes the proof of Claim~\ref{claim:neighbor:l+1}.
\end{itemize}

\begin{figure}[ht!]
\centering
    \includegraphics[width=0.95\linewidth]{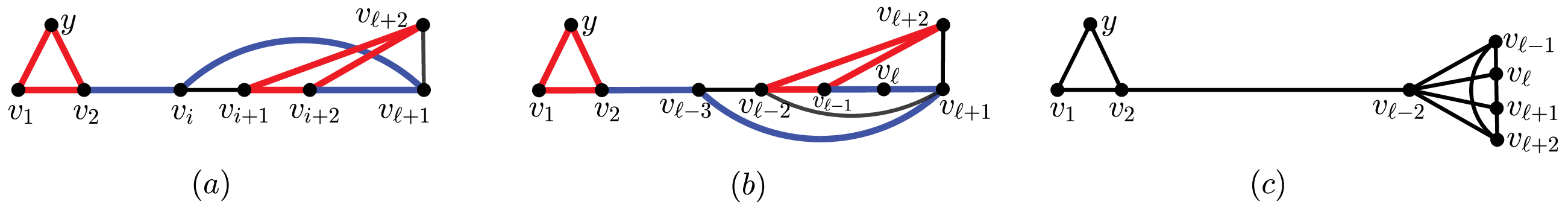}
\caption{Illustrations of Claim~\ref{claim:neighbor:l+1}.}
\label{fig-claim-5}
\end{figure}

By Claim~\ref{claim:neighbor:l+1}, we have that $|N_P(v_{\ell+1})|\geq 3$. Let $v_i,v_j\in N_P(v_{\ell+1})$ such that $i$ is as small as possible and $j$ is as large as possible. By Claim~\ref{Claim-l-3-dumbbell}, we have that $v_{\ell+1}\nsim v_{\ell-1}$. Hence $j\leq \ell-2$. Now, we assert that the neighbors of $v_{\ell+1}$ on $P[v_1,v_{\ell-2}]$ are consecutive. Otherwise, without loss of generality, we may assume that $v_{i+1}\nsim v_{\ell+1}$. This implies that $G[\{v_1,y,v_{i+1},v_{j+1},v_{\ell+1}\}]$ has size at most $2$, a contradiction. Next, we assert that $\{v_{i+1},\ldots,v_j\}\subseteq N(v_{j+1})$. Otherwise, let $v_r\in \{v_{i+1},\ldots,v_j\}$ such that $v_r\nsim v_{j+1}$. However, it follows that $G[\{v_1,y,v_{r},v_{j+1},v_{\ell+1}\}]$ has size at most $2$, a contradiction.  If $i\leq j-2$, then $v_{j-1}\sim v_{j+1}$. This creates a $(\ell-3)$-dumbbell, contradicting Claim \ref{Claim-l-3-dumbbell}. Hence, $i=j-1$. If $v_1\sim v_{\ell}$, then the cycle $$v_1\overrightarrow{P}[v_1,v_{j-1}]v_{j-1}v_{\ell+1}v_j\overrightarrow{P}[v_j,v_{\ell}]v_{\ell}v_1$$ together with $y$ forms a $\widetilde{C}_{\ell+1},$  a contradiction. Hence $v_{\ell}\nsim v_1$. By symmetry, $v_{\ell}\nsim y$. Since $G[\{v_1,y,v_{j+1},v_{\ell},v_{\ell+1}\}]$ has size at least $3$, $v_{j+1}\sim v_{\ell}$. Repeating this argument, we obtain
\begin{align*}
    \{v_{j+1},\ldots,v_{\ell-2}\} \subseteq N(v_{\ell}).
\end{align*}
However, this creates a $(\ell-3)$-dumbbell, a contradiction. Therefore, $j=\ell-2$. By the choice of $R$ and Claim~\ref{Claim-l-3-dumbbell}, we have that $\{v_{\ell-1},v_{\ell}\}\subseteq N( v_{\ell-3}).$ In fact, $\{y,v_1,\dots,v_{\ell-3}\}$ is a clique. Indeed, otherwise, $G[\{y,v_1,\dots,v_{\ell-4}\}]$ contains a subgraph $H$ of order $3$ with size at most $2$, and then $G[V(H)\cup \{v_{\ell-1},v_{\ell+1}\}]$ also has size at most $2$, a contradiction.  Denote $S=\{v_1,y,v_2,\ldots,v_{\ell-5}\}$, $T=\{v_{\ell-2},v_{\ell-1},v_{\ell},v_{\ell+1}\}$, and $X=\{v_{\ell-4},v_{\ell-3}\}$. By an argument identical to that in the proof of Claim~\ref{Claim-l-3-dumbbell}, we obtain a $ \widetilde{C}_{\ell+1} $ or $ \widetilde{C}_{\ell+2} $, a contradiction. This completes the proof for the case $t=1$.

\subsection{Triangle in the middle: \texorpdfstring{$t\geq 2$}{t>1}}
Note that $v_{\ell+1}$ has no neighbors outside $V(R)$. Indeed, if it had such a neighbor, we could add that neighbor to $R$ and remove $v_1$ from $R$, thus obtaining a $(t-1,s)$-switch, a contradiction. In addition, if $v_{\ell+1}\sim v_{\ell-1}$, then $\ell=3$, a contradiction. Therefore, $v_{\ell+1}\nsim v_{\ell-1}$. We begin this section with the following simple claim.
\begin{claim}\label{claim-L+1-neighbor-P}

$[N^+_{P[v_{t+1},v_{\ell}]}(v_{\ell+1}),\,\{v_1,\ldots,v_{t-1}\}]=\emptyset$.
\end{claim}
To the contrary, choose $v_j\in N_{P[v_{t+1},v_{\ell}]}(v_{\ell+1})$ and $v_k\in V(P)$, where $1\leq k\leq t-1$. Then the path $$v_{k}^+ \overrightarrow{P}[v_{k}^+,v_{j}] v_{j}v_{\ell+1}\overleftarrow{P}[v_{\ell+1},v_{j}^+]v_j^+v_k\overleftarrow{P}[v_{k},v_{1}]v_{1}$$ together with $y$ is a $(t-k,\cdot)$-switch. This proves Claim~\ref{claim-L+1-neighbor-P}.

In particular, $v_{\ell+1}$ has no neighbors $v_{k}$ with $k<t$.
\begin{claim}\label{claim:1-nsim-3}
$v_1\nsim v_3$.
\end{claim}
To the contrary, assume that $v_1 \sim v_3$. By the choice of $R$, this implies $t=2$ and $s=1$. By Claim~\ref{cliam:l-size}, $\ell\geq 4$. Thus, $v_{\ell+1}\nsim v_{\ell-1}$. Since $\delta(G)\geq 3$, $|N(v_{\ell+1})\cap \{v_t,\ldots,v_{\ell-2}\}|\geq 2$. Let $v_j \in N_{P[v_t,v_{\ell-2}]}(v_{\ell+1})$ such that $j$ is as large as possible. Hence, $j\geq 3$. Clearly, $\{v_1,y,v_{j+1},v_{\ell+1}\}$ is an independent set.
Suppose $j\geq 4$. Note that $s=1$. However, it follows that $G[\{v_1,y,v_4,v_{j+1},v_{\ell+1}\}]$  has size at most $2$, a contradiction.

Therefore, $j=3$ and $v_2\sim v_{\ell+1}$. Suppose that $v_1$ has a neighbor in $V(\overline{R})$, say $v_0$. Then $N(v_0)\cap \{v_4,v_{\ell+1}\}=\emptyset$. This implies that $G[v_0,v_1,y,v_4,v_{\ell+1}]$ has size at most $2$, a contradiction. Therefore, $v_{1}$ has no neighbors outside $V(R)$.

Suppose that $\ell\geq 6$.   Since $G[\{v_1,y,v_4,v_i,v_{\ell+1}\}]$ has size at least $3$ for any $i\in [5,\,\ell-1]$, it follows that $\{v_1,y\}\subseteq N(v_i)$.
  Now, we consider a new $(2,\cdot)$-switch $R'$ consisting of a path $v_1v_2v_3v_{\ell+1}\overleftarrow{P}[v_{\ell+1},v_4]v_4$ together with $y$.  By a similar analysis on $R'$, we obtain $\{v_1,y\}\subseteq N(v_{\ell})$.  However, the path
$v_3v_2v_1v_6\overrightarrow{P}[v_6,v_{\ell}]v_{\ell}yv_5v_4$
together with $v_{\ell+1}$ forms a $(1,\cdot)$-switch, a contradiction.

\medskip

Therefore, $\ell=5$.  Note that $N_{\overline{R}}(\{v_1,y,v_4,v_6\})=\emptyset$. Since $|V(G)|\geq 9$, there exists $z\in V(\overline{R})$. However, it follows that $G[\{z,v_1,y,v_4,v_6\}]$ has no edges, a contradiction. This proves Claim~\ref{claim:1-nsim-3}.

\begin{claim}\label{claim:1-consecuitive-neighbor}
$v_1$ can not be adjacent to two consecutive vertices in $V(P)$.
\end{claim}

Otherwise, suppose that there exists $v_i\in V(P)$ such that $\{v_i, v_{i+1}\}\subseteq N(v_1)$, and
further assume that $i$ is chosen to be as large as possible.  Then $v_1\nsim v_{i+2}$.
\begin{itemize}
    \item If $i\leq t-1$, then the path $v_2\overrightarrow{P}[v_2,v_{t}]v_{t}yv_{t+1}\overrightarrow{P}[v_{t+1},v_{\ell+1}]v_{\ell+1}$ together with $v_1$ is a $(t-1,\cdot)$-switch, a contradiction.

    \item If $t+1\leq i\leq \ell$.
 then the path $v_2\overrightarrow{P}[v_2v_{t+1}]v_{t+1}v_{1}v_{t+1}^+\overrightarrow{P}[v_{t+1}^+v_{\ell+1}]v_{\ell+1}$ together with $v_1$ is a $(t-1,\cdot)$-switch, a contradiction.
\end{itemize}

  Hence, $i=t$. If $s\geq 2$, then the path $$v_2\overrightarrow{P}[v_2,v_{t+1}]v_{t+1}yv_{t+1}^+\overrightarrow{P}[v_{t+1}^+,v_{\ell+1}]v_{\ell+1}$$ together with $y$ is a $(t-1,\cdot)$-switch, a contradiction. Thus, we have $s=1$. By Claim~\ref{claim:1-nsim-3}, we have that $t\geq 3$. By the choice of $i$ and the optimality of $R$, one has $y\notin N(\{v_t^-,v_{t+1}^{+}\})$ and $v_{t+1}^{+}\neq v_{\ell+1}$. Clearly, $v_t^{-}\notin N(\{ v_{\ell+1}, v_{t+1}^+\})$ and $v_1\notin  N(\{ y, v_{t+1}^+\})$. We have $v_1\nsim v_{t}^-$, otherwise the path $$v_2\overrightarrow{P}[v_2,v_{t}]v_{t}yv_{t+1}\overrightarrow{P}[v_{t+1},v_{\ell+1}]v_{\ell+1} $$ together with $v_1$ is a $(t-1,\cdot)$-switch. One has $v_{\ell+1}\nsim y$, otherwise the path $v_2,\overrightarrow{P}[v_{2},v_{\ell+1}],v_{\ell+1},y$ together with $v_{1}$ is a $(t-1,\cdot)$-switch.
Thus,  $G[\{v_1,v_t^{-},v_{t+1}^{+},v_{\ell+1},y\}]$ has size at most one, a contradiction.  This proves Claim~\ref{claim:1-consecuitive-neighbor}.

\begin{claim}\label{claim:v_1-neighbor}
$|N_{\overline{R}}(v_1)|\leq 1$.
\end{claim}
Otherwise, let $v_0,v_0'\in N_{\overline{R}}(v_1)$. Note that $v_{\ell+1}$ has no neighbors outside $R$. Since $\delta(G)\ge 3$, $v_{\ell+1}$ has a neighbor in $\{v_t,\ldots,v_{\ell-2}\}$. Let $v_j\in N(v_{\ell+1})$ be chosen with $j$ as large as possible. If $j\ne t$ or $s\ge 2$, then in order to avoid a $\widetilde{C}_{\ell+1}$ or $\widetilde{C}_{\ell+2}$ in $G$, we must have
\[
N(\{v_{\ell+1},v_{j+1}\})\cap \{v_0,v_0',v_1\}=\emptyset.
\]
This implies that  $G[\{v_0,v_0',v_1,v_{j+1},v_{\ell+1}\}]$ contains exactly two edges, a contradiction. Hence $j=t$ and $s=1$. Note that by Claim \ref{claim-L+1-neighbor-P}, $N(v_{\ell+1})\cap \{v_2,\ldots,v_{t-1}\}= \emptyset$. Since $\delta(G)\geq 3$, $y\sim v_{\ell+1}$. Now, we consider the subgraph induced by $\{v_0,v_0',v_1,v_{t+2},v_{\ell+1}\}$.
\begin{itemize}
    \item If $v_1\sim v_{t+2}$, then the path $v_tyv_{\ell+1}\overleftarrow{P}[v_{\ell+1},v_{t+2}]v_{t+2}v_1\overrightarrow{P}[v_1,v_{t-1}]v_{t-1}$ together with $v_{t+1}$ is a $(1,\cdot)$-switch, a contradiction.
    \item If $v_0\sim v_{t+2}$, then the path $v_tyv_{\ell+1}\overleftarrow{P}[v_{\ell+1},v_{t+2}]v_{t+2}v_0v_1\overrightarrow{P}[v_1,v_{t-2}]v_{t-2}$ together with $v_{t+1}$ is a $(1,\cdot)$-switch, a contradiction.
\end{itemize}
Hence $N(v_{t+2})\cap \{v_0,v_0',v_1\}=\emptyset$. Note that $N(v_{\ell+1})\cap \{v_0,v_0',v_1\}=\emptyset$. Since $G[\{v_0,v_0',v_1,v_{t+2},v_{\ell+1}\}]$ has size at least $3$, $v_{t+2}\sim v_{\ell+1} $. By the choice of $j$, we have that $t+2=\ell$.  If $t\geq 3$, then the path $v_{\ell}v_{\ell+1}v_t\overleftarrow{P}[v_t,v_{1}],v_1,v_0$ together with $y$ is a $(2,\cdot)$-switch, a contradiction. Then $t=2$.  Since $G[\{v_0,v_0',y,v_4,v_5\}]$ has size at least $3$, $N(y)\cap \{v_0,v_0'\}\neq \emptyset$. Without loss of generality, we may assume that $y\sim v_0$. By the choice of $R$, we have that $N(v_2)\cap \{v_0,v_0'\}=\emptyset$. One has $v_2\nsim v_4$, otherwise the path $v_3v_2v_1v_0y$ together with $v_4$ is a $(1,\cdot)$-switch. Therefore, $G[\{v_0,v_0',v_2,v_4,v_5\}]$ has size at most two, a contradiction. This proves Claim~\ref{claim:v_1-neighbor}.

\begin{figure}[ht!]
\centering
    \includegraphics[width=0.8\linewidth]{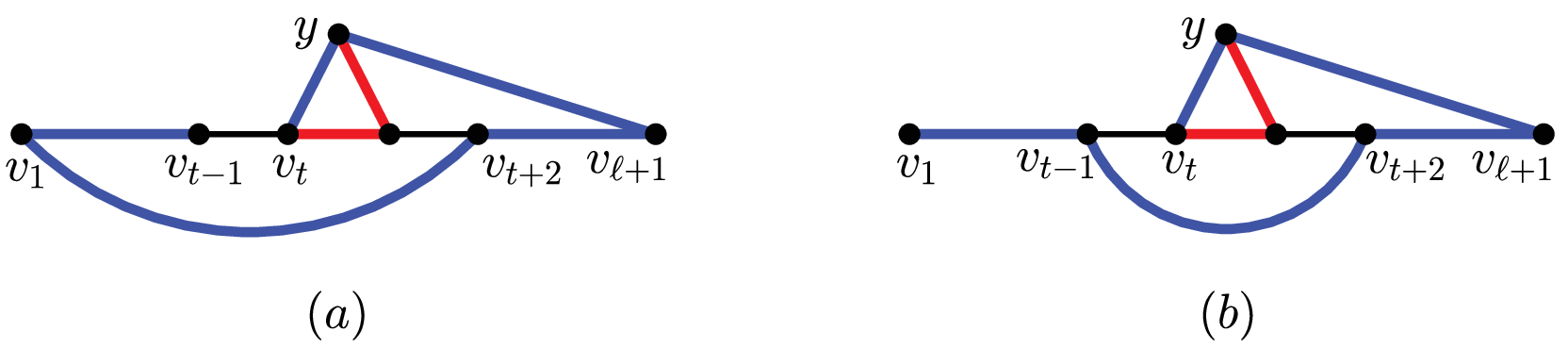}
\caption{Illustrations of Claim~\ref{claim:y-l+1}.}
\label{fig-claim-10}
\end{figure}

\begin{claim}\label{claim:y-l+1}
$v_{\ell+1}\nsim y$.
\end{claim}
Otherwise, assume that $v_{\ell+1}\sim y$. Note that $t\geq 2$. Observe that
\begin{flalign}\label{align:non-neighbor-pair}
   N(v_{t+2})\cap \{v_1,v_{t-1}\}=\emptyset.
\end{flalign}
indeed, the presence of these edges would create a $(1,\cdot)$-switch (see Figure \ref{fig-claim-10}).
We assert that $t\leq 3$. Otherwise, suppose that $t\geq 4$. If $v_1\sim v_{\ell}$, then the path $$v_{\ell+1}yv_{t+1}\overrightarrow{P}[v_{t+1},v_{\ell}]v_{\ell}v_1\overrightarrow{P}[v_1,v_{t-1}]v_{t-1}$$
 together with $v_t$ forms a $(2,\cdot)$-switch, a contradiction. If $v_1\sim v_{\ell-1}$, then path $$v_{\ell}v_{\ell+1}yv_{t+1}\overrightarrow{P}[v_{t+1},v_{\ell-1}]v_{\ell-1}v_1\overrightarrow{P}[v_1,v_{t-1}]v_{t-1}$$ together with $v_t$ forms a $(3,\cdot)$-switch, a contradiction.
If $v_{t-1}\sim v_{\ell}$, then the path $$v_{\ell+1}yv_{t+1}\overrightarrow{P}[v_{t+1},v_{\ell}]v_{t-1}\overleftarrow{P}[v_{t-1},v_1]v_1$$ together with $v_t$ forms a $(2,\cdot)$-switch, a contradiction. If $v_{t-1}\sim v_{\ell-1}$, then path $$v_{\ell}v_{\ell+1}yv_{t+1}\overrightarrow{P}[v_{t+1},v_{\ell-1}]v_{\ell-1}v_{t-1}\overleftarrow{P}[v_{t-1},v_1]v_1$$ together with $v_t$ forms a $(3,\cdot)$-switch, a contradiction. Note that $v_{\ell-1}\nsim v_{\ell+1}$ since $\ell \geq 4$ and $t \geq 2$. Since $G[\{v_1,v_{t-1},v_{\ell-1},v_{\ell},v_{\ell+1}\}]$ has size at least $3$, $v_1\sim v_{t-1}$. Let $v_{i}\in N(v_1)\cap \{v_3,\ldots,v_{\ell}\} $ with $i$ being as small as possible.
Then $i\leq t-1$. Clearly, $v_{i-1}\nsim v_{\ell+1}$. If $v_{i-1}\sim v_{\ell}$, then the path $$v_{\ell+1}yv_{t+1}\overrightarrow{P}[v_{t+1},v_{\ell}]v_{\ell}v_{i-1}\overrightarrow{P}[v_{i-1},v_{1}]v_{1}v_i\overrightarrow{P}[v_{i},v_{t-1}]v_{t-1}$$
 together with $v_t$ forms a $(2,\cdot)$-switch, a contradiction. If $v_{i-1}\sim v_{\ell-1}$, the path $$v_{\ell}v_{\ell+1}yv_{t+1}\overrightarrow{P}[v_{t+1},v_{\ell-1}]v_{\ell-1} v_{i-1}\overrightarrow{P}[v_{i-1},v_{1}]v_{1}v_i\overrightarrow{P}[v_{i},v_{t-1}]v_{t-1}$$
 together with $v_t$ forms a $(3,\cdot)$-switch, a contradiction. However, it follows that $G[v_1,v_{i-1},v_{\ell-1},v_{\ell},v_{\ell+1}]$ has size exactly $2$, a contradiction. Therefore, $t\leq 3$.

\medskip

First, suppose that $t=2$. By (\ref{align:non-neighbor-pair}) and Claim~\ref{claim:1-nsim-3}, it follows that $i\geq 5$ and $v_1\nsim y$.
\begin{itemize}
    \item $N_{\overline{R}}(v_1)\neq \emptyset$. Let $v_0\in N_{\overline{R}}(v_1)$. If $v_1\sim v_5$, then the path $v_3yv_{\ell+1}\overleftarrow{P}[v_{\ell+1},v_5]v_5v_1v_0$ together with $v_2$ forms $(1,\cdot)$-switch, a contradiction. By (\ref{align:non-neighbor-pair}),  $N(\{v_4,v_5\})\cap \{v_0,v_1\}=\emptyset$. This implies that $i\geq 6$. If $v_0\sim v_{i-1}$, then  the path
$$ v_2yv_{\ell+1}\overleftarrow{P}[v_{\ell+1},v_i]v_iv_1v_0v_{i-1}\overleftarrow{P}[v_{i-1},v_5]v_5 $$
forms a $(1,\cdot)$-switch with $v_3$, a contradiction. Hence $v_0\nsim v_{i-1}$.
 Since $G[\{v_0,v_1,v_4,v_{i-1},v_{\ell+1}\}]$ has size at least $3$, $\{v_{i-1},v_{\ell+1}\}\subseteq N(v_4)$. However, it follows that the path $$yv_2v_1v_i\overrightarrow{P}[v_i,v_{\ell+1}]v_{\ell+1}v_4\overrightarrow{P}[v_{4},v_{i-1}]v_{i-1}$$  forms a $(1,\cdot)$-switch with $v_3$, a contradiction.

\item $N_{\overline{R}}(v_1)=\emptyset$. Since $\delta(G)\geq 3$ and $v_1\nsim y$, $|N(v_1)\cap \{v_3,\ldots,v_{\ell}\}|\geq 2$. Let $v_{i'}\in N(v_1)\cap \{v_3,\ldots,v_{\ell}\} $ with $i'$ being as large as possible. Then $i'\neq i$. By Claim~\ref{claim:1-consecuitive-neighbor}, we have that $i'\geq i+2$. If $v_{i-1}\sim v_{i'-1}$, then the path $$yv_3\overrightarrow{P}[v_3,v_{i-1}]v_{i-1}v_{i'-1}\overleftarrow{P}[v_{i'-1},v_i]v_iv_1v_{i'}\overrightarrow{P}[v_{i'},v_{\ell+1}]v_{\ell+1}$$ forms a $(1,\cdot)$-switch, a contradiction. Therefore, $v_{i-1}\nsim v_{i'-1}$. Clearly, $N(v_{\ell+1})\cap \{v_{i-1},v_{i'-1}\}=\emptyset$. This implies that $\{v_1,v_{i-1},v_{i'-1},v_{\ell+1}\}$ is an independent set.  Since $G[\{v_1,y,v_{i-1},v_{i'-1},v_{\ell+1}\}]$ has size at least $3$, $\{v_{i-1},v_{i'-1},v_{\ell+1}\}\subseteq N(y)$. If $i\geq 6$, then $v_4\sim v_{\ell+1}$, since $G[\{v_1,v_4,v_{i-1},v_{i'-1},v_{\ell+1}\}]$ has size at least $3$ together with (\ref{align:non-neighbor-pair}).  However, it follows that  the path $$v_2yv_{i-1}\overleftarrow{P}[v_{i-1},v_4]v_4v_{\ell+1}\overleftarrow{P}[v_{\ell+1},v_{i'}]v_{i'}v_1v_i\overrightarrow{P}[v_i,v_{i'-1}]v_{i'-1}$$ forms a $(1,\cdot)$-switch, a contradiction. Hence $i=5$. This implies that $v_4yv_{\ell+1}\overleftarrow{P}[v_{\ell+1},v_5]v_5v_1v_2$ forms a $(1,\cdot)$-switch, a contradiction.

\end{itemize}

Next, we assume that $t=3$. This implies that $\ell\geq 5$. In fact, $\ell\geq 6$. Indeed, if $\ell=5$, then $N_{\overline{R}}(v_1)=\emptyset$. Since $\delta(G)\geq 3$ and $t=3$, $v_6\sim v_3$ and $\{y,v_4\}\subseteq N(v_1)$. However, it follows that the path $v_2v_1v_4v_5v_6v_3$ together with $y$ forms a $(2,\cdot)$-switch, a contradiction. Therefore, $\ell\geq 6$.

 If $v_1\sim v_{\ell}$, then the path $v_{\ell+1}yv_{4+1}\overrightarrow{P}[v_{4+1},v_{\ell}]v_{\ell}v_1v_2$
 together with $v_3$ forms a $(2,\cdot)$-switch, a contradiction. If $v_2\sim v_{\ell}$, then the path $v_{\ell+1}yv_{4+1}\overrightarrow{P}[v_{4+1},v_{\ell}]v_{\ell}v_2v_1$ together with $v_3$ forms a $(2,\cdot)$-switch, a contradiction. Therefore, $[\{v_1,v_2\},\,\{v_{\ell},v_{\ell+1}\}]=\emptyset$.
Since $\delta(G)\geq 3$, we have
$N(v_{\ell+1})\cap \{v_3,\ldots,v_{\ell-1}\}\neq \emptyset.$
Let $v_j\in N(v_{\ell+1})\cap \{v_3,\ldots,v_{\ell-1}\}$ with $j$ being chosen as large as possible.
\begin{itemize}
    \item $N_{\overline{R}}(v_2)\neq\emptyset$. Let $v_1'\in N_{\overline{R}}(v_2)$. Clearly, $N(\{v_5,v_{\ell+1}\})\cap \{v_1,v_1',v_2\}=\emptyset$ and $v_1'\nsim v_1$. Since $G[\{v_1,v_1',v_2,v_5,v_{\ell+1}\}]$ has size at least $3$, $v_5\sim v_{\ell+1}$. Then $j\geq 5$.  However, Claim~\ref{claim-L+1-neighbor-P} implies that $G[\{v_1,v_1',v_2,v_{j+1},v_{\ell+1}\}]$ has size at most $2$, a contradiction.

\item $N_{\overline{R}}(v_2)=\emptyset$. If $v_2\sim v_4$, then the path $v_1v_2v_4\overrightarrow{P}[v_4,v_{\ell+1}]v_{\ell+1}y$ together with $v_3$ forms a $(2,\cdot)$-switch, a contradiction. Since $\delta(G)\geq 3$, $N(v_2)\cap \{v_5,\ldots,v_{\ell-1}\}\neq \emptyset$. Let $v_i\in N(v_2)\cap\{v_5,\ldots,v_{\ell-1}\}$ with $i$ being chosen as small as possible. If $v_1\sim v_{i-1}$, then path $$yv_4\overrightarrow{P}[v_{4},v_{i-1}]v_{i-1}v_1v_2v_i\overrightarrow{P}[v_i,v_{\ell+1}]v_{\ell+1}$$ together with $v_3$ forms a $(1,\cdot)$-switch, a contradiction. If $v_{i-1}\sim v_{\ell}$, then the path $$v_{\ell+1}yv_4\overrightarrow{P}[v_4,v_{i-1}]v_{i-1}v_{\ell}\overleftarrow{P}[v_{\ell},v_i]v_iv_2v_1$$ together with $v_3$ forms a $(2,\cdot)$-switch, a contradiction. If $v_{\ell+1}\sim v_{i-1}$, then the path $$v_3\overrightarrow{P}[v_i,v_{i-1}]v_{i-1}v_{\ell+1}\overleftarrow{P}[v_{\ell+1},v_i]v_iv_2v_1$$ together with $y$ forms a $(1,\cdot)$-switch, a contradiction. This implies that $G[\{v_1,v_2,v_{i-1},v_{\ell},v_{\ell+1}\}]$ has size at most $2$, a contradiction.
\end{itemize}
This completes the proof of Claim~\ref{claim:y-l+1}.

\begin{claim}\label{claim:1-y}
    $v_1\nsim y$.
\end{claim}
Otherwise, assume that $v_1\sim y$. Then $t\geq 3$. Since $N_{\overline{R}}(v_{\ell+1})=\emptyset$ and $v_{\ell+1}\nsim v_{\ell-1}$, Claims~\ref{claim-L+1-neighbor-P} and~\ref{claim:y-l+1} imply that $N(v_{\ell+1})\cap \{v_{t+1},\ldots,v_{\ell-1}\}\neq \emptyset$. Let $v_j\in N(v_{\ell+1})\cap \{v_t,\ldots,v_{\ell-1}\}$ with $j$ chosen as large as possible. By Claim~\ref{claim-L+1-neighbor-P}, we have that $N(v_{j+1})\cap \{v_2,v_{t-1}\}=\emptyset$ and $v_2\nsim v_{\ell+1}$.

\medskip

Since $G[\{v_1,v_{t-1},v_{j+1},v_{\ell+1},y\}]$ has size at least $3$, $v_1\sim v_{t-1}$ and $y\sim v_{j+1}$.
    Suppose that $t\geq 4$.   If $v_2\sim v_{t-1}$, then the path
\[
    v_{t-2}\overleftarrow{P}[v_{t-2},v_1]\,v_1yv_t\overrightarrow{P}[v_t,v_{\ell+1}]\,v_{\ell+1}
\]
together with $v_{t-1}$ forms a $(t-3,\cdot)$-switch, a contradiction. Therefore,  $G[\{v_1,v_2,v_{t-1},v_{j+1},v_{\ell+1}\}]$ contains at most two edges, a contradiction.
Hence $t=3$. Suppose that $N_{\overline{R}}(v_2)\neq \emptyset$. Let $v_1'\in N_{\overline{R}}(v_2)$. Since $t=3$, $N(v_1')\cap\{v_1,v_3\}=\emptyset$. Clearly, $N(v_1')\cap \{v_{j+1},v_{\ell+1}\}=\emptyset$. This implies that $G[\{v_1,v_1',v_3,v_{j+1},v_{\ell+1}\}]$ has size at most two, a contradiction. Therefore, $N_{\overline{R}}(v_2)=\emptyset$.  Since $\delta(G)\geq 3$, $N(v_2)\cap \{v_4,\ldots,v_{\ell}\}\neq \emptyset$. Let $v_h\in N(v_2)\cap \{v_4,\ldots,v_{\ell}\}$.  If $h=4$, then the path $$v_1v_2v_4\overrightarrow{P}[v_4,v_{j}]v_{j}v_{\ell+1}\overleftarrow{P}[v_{\ell+1},v_{j+1}]v_{j+1}y$$ together with $v_{3}$ forms a $(2,\cdot)$-switch, a contradiction.  Then $h\geq 5$.

Let $v_r\in \{v_5,\ldots,v_{j}\}$ such that $\{v_5,\ldots,v_{r-1}\} \subseteq N(y)$ with $r$ being as large as possible. If $v_1\sim v_r$, then the path $$yv_{r-2}\overleftarrow{P}[v_{r-2},v_1]v_1v_r\overrightarrow{P}[v_r,v_{\ell+1}]v_{\ell+1}$$ together with $v_{r-1}$ forms a $(1,\cdot)$-switch, a contradiction. If $v_2\sim v_{r}$, then the path $$v_1yv_{r-2}\overleftarrow{P}[v_{r-2},v_2]v_2v_r\overrightarrow{P}[v_r,v_{\ell+1}]v_{\ell+1}$$ forms a $(2,\cdot)$-switch, a contradiction. Therefore, $N(v_r)\cap \{v_1,v_2\}=\emptyset$. Since $G[\{y,v_1,v_2,v_r,v_{\ell+1}\}]$ has size at least $3$, $v_{r}\sim v_{\ell+1}$ or $v_r\sim y$. In particular, if $v_r\sim y$, then $r=j-1$.

\begin{itemize}
    \item If $h\geq {r+1}$, then the path $v_1yv_{r-2}\overleftarrow{P}[v_{r-2},v_2]v_2v_h\overrightarrow{P}[v_h,v_{\ell+1}]v_{\ell+1}v_{r}\overrightarrow{P}[v_r,v_{h-1}]v_{h-1}$ with $v_{r-1}$ forms a $(2,\cdot)$-switch, a contradiction.
    \item If $5\leq h\leq  {r-1}$, then the path $v_1yv_{h-2}\overleftarrow{P}[v_{h-2},v_2]v_2v_{h}\overrightarrow{P}[v_h,v_{\ell+1}]v_{\ell+1}$  with $v_{h-1}$ forms a $(2,\cdot)$-switch, a contradiction.
\end{itemize}
This completes the proof of Claim~\ref{claim:1-y}.

\medskip

Suppose $t\geq 3$. Since $G[\{v_1,v_{t-1},y,v_{j+1},v_{\ell+1}\}]$ has size at least $3$, $v_{t-1}\sim v_{j+1}$. However, it follows that the path $yv_{t+1}\overrightarrow{P}[v_{t+1},v_j]v_jv_{\ell+1}\overleftarrow{P}[v_{\ell+1},v_{j+1}]v_{j+1}v_{t-1}\overleftarrow{P}[v_{t-1},v_1]v_1$ together with $v_{t}$ forms a $(1,\cdot)$-switch, a contradiction. Therefore, $t=2$. By Claim~\ref{claim:v_1-neighbor},
$|N_{\overline{R}}(v_1)|\leq 1$. We complete the proof of Theorem~\ref{thm:l-cycle-expansion} by analyzing the following cases.

\begin{case}
    $|N_{\overline{R}}(v_1)|=1$.
\end{case}
Let $v_0\in N_{\overline{R}}(v_1)$. Clearly, $N(v_0)\cap \{v_2,v_{\ell+1}\}=\emptyset$. By Claim~\ref{claim-L+1-neighbor-P} and \ref{claim:y-l+1}, we have that $|N(v_{\ell+1})\cap \{v_t,\ldots,v_{\ell-2}\}|\geq 2$. Let $v_j\in N(v_{\ell+1})\cap \{v_t,\ldots,v_{\ell-2}\}$ with $j$ as large as possible. Then $j\geq t+1$. By Claim~\ref{claim:1-nsim-3} and $\delta(G)\geq 3$, we have that $N(v_1)\cap \{v_4,\ldots,v_\ell\}\neq \emptyset$. Let $v_i\in N(v_1)\cap \{v_4,\ldots,v_\ell\}$ with $i$ as small as possible.

\begin{itemize}
    \item $i= j+2 $.  Then the path $$v_2\overrightarrow{P}[v_2,v_j]v_jv_{\ell+1}\overleftarrow{P}[v_{\ell+1},v_{j+2}]v_{j+2}v_1v_0$$ together with $y$ forms a $(1,\cdot)$-switch, a contradiction.
    \item $i\geq j+3$.  Clearly, $\{v_0,v_1,v_{j+1},v_{\ell+1}\}$ is an independent set and $v_{i-1}\nsim v_{\ell+1}$. By Claim~\ref{claim:1-consecuitive-neighbor}, we have that $v_1\nsim v_{i-1}$.  Since $G[\{v_0,v_1,v_{i-1},v_{j+1},v_{\ell+1}\}]$ has size at least $3$, $v_0\sim v_{i-1}$. However, it follows  the path $$v_2v_3\overrightarrow{P}[v_3,v_{i-1}]v_{i-1}v_0v_1v_i\overrightarrow{P}[v_i,v_{\ell+1}]v_{\ell+1}$$ together with $y$ forms a $(1,\cdot)$-switch, a contradiction.
    \item $i\leq j$. Recall that $i\geq 4$. Clearly, $v_{i-1}\nsim v_{j+1}$. However, it follows that $G[\{v_0,v_1,v_{i-1},v_{j+1},v_{\ell+1}\}]$ has size at most $2$, a contradiction.

\end{itemize}

\begin{case}
    $N_{\overline{R}}(v_1)= \emptyset$.
\end{case}
By  Claim~\ref{claim:1-y} and $\delta(G)\geq 3$, we have that $|N(v_1)\cap\{v_4,\ldots,v_{\ell}\}|\geq 2 $. Let $v_i,v_{i'}\in N(v_1)\cap\{v_4,\ldots,v_{\ell}\}$ with $i$ chosen as small as possible and $i'$ as large as possible.  By Claim~\ref{claim:1-consecuitive-neighbor}, we have that $i'\geq i+2$. Note that $G[\{v_1,v_{i-1},v_{i'-1},v_{\ell+1}\}]$ has size at most $1$, possibly with $v_{i-1}\sim v_{i'-1}$.
 Furthermore, let $v_j\in N(v_{\ell+1})\cap \{v_2,\ldots ,v_{\ell-2}\}$ with $j$ as large as possible.
\begin{itemize}
    \item $j\geq i'$. It is obvious that  $N(v_{j+1})\cap \{v_1,v_{i-1},v_{i'-1},v_{\ell+1}\}=\emptyset$. However, it follows that $G[\{v_1,v_{i-1},v_{i'-1},v_{\ell+1},v_{j+1}\}]$ has size at most $1$, a contradiction.
    \item $j=i'-2$. That is, $j+1=i'-1$. Now, $\{v_1,v_{i-1},v_{j+1},v_{\ell+1}\}$ is an independent set. However, by Claims~\ref{claim:y-l+1} and~\ref{claim:1-y}, it follows that
$G[\{v_1,v_{i-1},v_{j+1},v_{\ell+1},y\}]$ has size at most $2$, a contradiction.
   \item $i\leq j\leq i'-3$. Now, $G[\{v_1,v_{i-1},v_{i'-1},v_{\ell+1},v_{j+1}\}]$ has size at most $2$, a contradiction.
   \item $j=i-2$. That is, $j+1=i-1$. If $v_{j+1}\sim v_{i'-1}$, then the path $$v_2\overrightarrow{P}[v_2,v_j]v_jv_{\ell+1}\overleftarrow{P}[v_{\ell+1},v_{i'}]v_{i}v_1v_{i}\overrightarrow{P}[v_{i},v_{i'-1}]v_{i'-1}$$ together with $y$ forms a $(1,\cdot)$-switch, a contradiction. Therefore, $v_{j+1}\nsim v_{i'-1}$. However, by Claims~\ref{claim:y-l+1} and~\ref{claim:1-y}, it follows that
$G[\{v_1,v_{i'-1},v_{j+1},v_{\ell+1},y\}]$ has size at most $2$, a contradiction.
   \item $j\leq i-3$. If $v_{j+1}\sim v_{i'-1}$, then the path $$v_2\overrightarrow{P}[v_2,v_j]v_jv_{\ell+1}\overleftarrow{P}[v_{\ell+1},v_{i'}]v_{i'}v_1v_i\overleftarrow{P}[v_i,v_{j+1}]v_{j+1}v_{i'-1}\overleftarrow{P}[v_{i'-1},v_{i+1}]v_{i+1}$$ together with  $y$ forms a $(1,\cdot)$-switch, a contradiction. Therefore, $v_{j+1}\nsim v_{i'-1}$. However, it follows that  $G[\{v_1,v_{i-1},v_{i'-1},v_{\ell+1},v_{j+1}\}]$ has size at most $2$, a contradiction.

\end{itemize}
 This completes the proof of Theorem~\ref{thm:l-cycle-expansion}.
\end{proof}

\section{Concluding remarks}\label{sec5}
This paper investigates the pancyclicity of $[5,3]$-graphs and confirms Zhan’s conjecture for the case $p=3$. The proof method presented here may potentially be extended to tackle the remaining cases $p=4$ and $p=5$, possibly with additional new ideas.

\section*{Acknowledgments}
The authors are grateful to Professor Xingzhi Zhan for proposing this interesting conjecture and for his continuous support. Feng Liu is supported by National Key R\&D Program of China under Grant No. 2022YFA1006400, National Natural Science Foundation of China (Nos. 12571376) and Shanghai Municipal Education Commission (No. 2024AIYB003). Hongxi Liu was supported by the National Natural Science Foundation of China (Grant No. 12271170) and the Science and Technology Commission of Shanghai Municipality (Grant No. 22DZ2229014).

\section*{Declaration}

\noindent$\textbf{Conflict~of~interest}$
The authors declare that they have no known competing financial interests or personal relationships that could have appeared to influence the work reported in this paper.
	
\noindent$\textbf{Data~availability}$
Data sharing not applicable to this paper as no datasets were generated or analysed during the current study.

\end{document}